\documentclass[10pt]{amsart}
\usepackage{amsmath, amssymb, amsthm, mathtools}
\usepackage[latin1]{inputenc} % for Macs
\usepackage{stix}
\usepackage{comment}
\usepackage[margin=1.42in]{geometry}

\newtheorem{mydef}{Definition}[section] %initiate defintion
\newtheorem{thm}{Theorem} %initiatie thm
\newtheorem{corr}{Corollary} %corollary

\newtheorem{thm2}[mydef]{Theorem} %initiatie thm

\newtheorem{prop}[mydef]{Proposition}

\newtheorem{lemma}[mydef]{Lemma}

\theoremstyle{remark}
\newtheorem{rem}[mydef]{Remark}

\newtheorem{observation}[mydef]{Observation}

 %initiate example
\numberwithin{equation}{section}

\DeclareMathOperator{\ord}{ord}
\DeclareMathOperator{\mult}{mult}
\DeclareMathOperator{\ind}{index}
\DeclareMathOperator{\charac}{char}
\DeclareMathOperator{\wideg}{wideg}

\DeclareMathOperator{\Z}{\mathbb{Z}}
\DeclareMathOperator{\F}{\mathbb{F}}
\DeclareMathOperator{\N}{\mathbb{N}}
\DeclareMathOperator{\C}{\mathbb{C}}
\DeclareMathOperator{\Q}{\mathbb{Q}}
\DeclareMathOperator{\pind}{\iota}

\DeclareMathOperator{\K}{\mathbb{K}}
\DeclareMathOperator{\resit}{r\acute{e}sit}
\DeclareMathOperator{\MultiInd}{\alpha}

% for parts of a proof
\renewcommand{\=}{\coloneqq}
\newcommand{\dd}{\hspace{1pt}\operatorname{d}\hspace{-1pt}}

\newcommand{\wD}[2]{\widehat{\Delta}_{#1}(#2)}

\begin{document}

\title{Wildly ramified power series with large multiplicity}% at the origin}
%\title{Residue fixed point index and wildly ramified power series II}
%\title{The moduli space for formal classification of parabolic maps in positive characteristic}
%\title{Towards a classification of elements of infinite order in the Nottingham group}
\author{Jonas Nordqvist}
\address{Department of Mathematics, Linnaeus University, V{\"a}xj{\"o}, Sweden}
\email{jonas.nordqvist@lnu.se}

\begin{abstract}
In this paper we consider wildly ramified power series, \emph{i.e.}, power series defined over a field of positive characteristic, fixing the origin, where it is tangent to the identity. In this setting we introduce a new invariant under change of coordinates  called the \emph{second residue fixed point index}, and provide a closed formula for it. As the name suggests this invariant is closely related to the residue fixed point index, and they coincide in the case that the power series have small multiplicity. Finally, we characterize power series with large multiplicity having the smallest possible multiplicity at the origin under iteration, in terms of this new invariant.
%In this paper we define the second residue fixed point index, which is an invariant under local change of coordinates for power series tangent to the identity defined over fields of positive characteristics. As the name suggest it is similar to the residue fixed point index. Among other results we conclude that in general there is no such thing as a normal form in the classical sense from complex dynamics over fields of positive characteristic.
\end{abstract}

\maketitle

%\tableofcontents

\section{Introduction}
%This paper is partly an extension of a previous paper by the author and Juan Rivera-Letelier \cite{NordqvistRL2019} generalizing several results from said paper, and partly novel results on wildly ramified power series.

%Iteration of functions has an essential r{\^o}le in the study of dynamics. 

Let~$\K$ be a field. We recall that a fixed point~$z_0$ of a function~$f\colon \K \to \K$ is said to be \emph{parabolic} if~$f'(z_0)$ is a root of unity. In this paper we are interested in formal power series~$f$ having a parabolic fixed point at the origin, such that~$f'(0)=1$.

% 
%Henceforth, we say that~$f$ has \emph{large} multiplicity if~$\mult(f) > \charac(\K)+1$.

We say that two elements~$f,g \in z(1+z\K[[z]])$ are \emph{formally conjugated} if there exists a formal power series~$h$ of the form~$h(0)=0, h'(0) \neq 0$ such that~$g = h^{-1}\circ f \circ h$. Clearly, conjugation is an equivalence relation on~$z(1 + z\K[[z]])$. Henceforth, we assume that~$f$ is not the identity. A property that is invariant under conjugation by~$h$ is the multiplicity of~$f$, that is the unique integer~$m$ in the expansion \[f(z)-z = a_m z^{m} +\cdots,\quad a_m \neq 0.\] The multiplicity of~$f$ (at the origin) is denoted by~$\mult(f)$, and we put~$\mult(z) \= \infty$.  Another property of~$f$ that is invariant under conjugation is the \emph{residue fixed point index} which is defined as the coefficient of~$\frac{1}{z}$ in the Laurent expansion about 0 of \[\frac{1}{z-f(z)},\] and denoted by~$\ind(f)$, see \emph{e.g.} \cite[\S 12]{Milnor2006} for a description of the residue fixed point index.\footnote{We further note that the residue fixed point index may be defined in a similar manner for any fixed point of $f$.}

%\begin{comment}% Currently not in the arXiv version
Let~$\K \=\widehat{\C}=\C\cup \{\infty\}$, then for a rational map~$\phi \in \K(z)$, not the identity, of the form~$\phi(0)=0$ and~$\phi'(0)=1$, classification under formal conjugation is described in terms of the multiplicity and the residue fixed point index (see \emph{e.g.},~\cite{Ecalle1981b, Ecalle1981,Voronin1981}). One of the main results of this paper is that this statement is false if we replace~$\K$ by a field of positive characteristic, and that the classification depends on at least one other invariant which we describe in detail in this paper. 
%\end{comment}

Let~$\K$ be a field of positive characteristic. A power series~$f$ with coefficients in~$\K$ is said to be \emph{wildly ramified} if and only if~$f(0)=0$, and~$f'(0)=1$. The group under composition formed by all wildly ramified power series is called the \emph{Nottingham group} of~$\K$, and is denoted by~$\mathcal{N}(\K)$.
Moreover, every wildly ramified power series has an associated sequence  of integers called the lower ramification numbers.  It encodes the multiplicity of the origin for the iterates of the power series. Henceforth, we say that~$f$ has \emph{large} multiplicity if~$\mult(f) > \charac(\K)+1$, and we study the lower ramification numbers of power series with large  multiplicity  at  the  origin, and give a characterization of those power series having the smallest possible lower ramification numbers. Wildly ramified power series and its lower ramification numbers have been studied in many papers, see \emph{e.g.},~\cite{Sen1969,Keating1992,LaubieSaine1998,Wintenberger2004} for classic results and background on wildly ramified power series, and~\cite{KallalKirkpatrick2019,LaubieMovahhediSalinier2002,LindahlNordqvist2018,LindahlRiveraLetelier2013,LindahlRiveraLetelier2015,Fransson2017,RiveraLetelier2003, NordqvistRL2019} for results on lower ramification numbers related to this paper.
For a background on the Nottingham group and result on the subject relating to this paper we refer to~\cite{johnson1988,Camina2000,Keating2005} and references therein. Further, as previously mentioned, one of the results of this paper concerns formal classification of parabolic maps in positive characteristic. Similar studies have been made in the case of superattracting germs in \cite{Spencer2011} and this was later further investigated in \cite{ruggiero2015}.

Endowing a field $\K$ of positive characteristic with an absolute value will necessarily make it non-archimedean. Studying the iterates of a wildly ramified power series $f$ with coefficients in $\K$ can be thought of as a discrete dynamical system, in which we are particularly interested in the fixed points of $f$ and its iterates. The topic of discrete dynamical systems over non-archimedean fields is a well-studied subject see \emph{e.g.} the monographs \cite{Silverman2007, AnashinKhrennikov2009, Benedetto2019} and references therein.

%DiscreteDynamical 

%Finally for a treatment on one dimensional dynamics over non-archimedean fields in general, see \emph{e.g.} . 

In the next section we describe our results in detail.

%For an integer~$q\geq 1$ we denote by~$\mathcal{N}_q(\K)$ the subgroup \[\mathcal{N}_q(\K) \= \left\{f \in \mathcal{N}(\K) : \mult(f) \geq q+1\right\}.\]

%Let~$f,g \in \mathbb{P}_1(\K)[[z]]$ we say that~$f$ and~$g$ are dynamically equivalent if they differ by a local change of coordinates. We say that~$f$ and~$g$ are formally (analytically)  conjugated if there is a formal (analytic) coordinate~$h$ such that \[f(z) = h^{-1} \circ g \circ h(z).\]
%Classification of formal parabolic maps under the equivalence of conjugation is easy and well-known in the case of~$\K = \widehat{\C}$. Denote by~$\sim$ the equivalence relation of formal conjugation, and let \[\mathcal{N}_q(\K) \= \left\{ f \in \K[[z]] : \mult(f) = q+1\right\}.\] Further we put \[\mathcal{M}^1_q(\K) \= \mathcal{N}_q(\K) / \sim.\] We have~$\mathcal{M}^1_q(\widehat{\C}) = \C$, since formal classification depends on the multiplicity and the residue fixed point index.

%For rational maps over the complex numbers, every parabolic map is formally classified by two invariants, namely its multiplicity and its residue fixed points index. In positive characteristic, when the multiplicity of the fixed point is greater than the ground characteristic of the field, the corresponding statement is false as manifested in Proposition \ref{conj}. In positive characteristic, the moduli space for formal classification of parabolic maps is at least three dimensional.

\section{Main results}
The main results are naturally divided into two parts. First we introduce a `new' invariant under local change of coordinates in positive characteristic. The second part concerns lower ramification numbers and in particular we provide a characterization in terms of this new invariant of wildly ramified power series with large multiplicity having the smallest possible lower ramification numbers.%%%%% Furthermore, the results entail information about formal normal forms of parabolic maps in positive characteristic.

\subsection{A positive characteristic phenomena}
In order to state the main result of the first part we define the key concept of this section, and in order to state said concept we need to introduce the following notation. Given a field~$\K$ and an integer~$q\ge 1$, denote by~$\Lambda(q,\K)$ the ordered set of integers corresponding to the set \begin{equation}\label{eq:L} \big\{\ell \in \{0, 1,\dots,q\} \, : \,  \,  \ell = q \text{ in } \K\big\}.\end{equation}
%\begin{equation}\label{eq:L}\Lambda(q,\K) \coloneqq [q]\cap [0,q].\end{equation}
The~$j$th element of~$\Lambda(q,\K)$ is denoted by~$\ell_j$, and to be precise we have the indexing~$\ell_1 \le \ell_2 \le \dots$.% | a = \ell, \ell + p, \ell + 2p,\dots, \ell + p(\lfloor q/p \rfloor-1)\right\}.\]

%\begin{equation}\label{eq:L}\Lambda(q,\K) \coloneqq [q]\cap [0,q].\end{equation}
%The~$j$th element of~$\Lambda$ is denoted by~$\ell_j$, and to be precise we have the indexing~$\ell_1 \le \ell_2 \le \dots$.% | a = \ell, \ell + p, \ell + 2p,\dots, \ell + p(\lfloor q/p \rfloor-1)\right\}.\]

\begin{rem}\label{rem1}
If~$\charac(\K) =0$ or if~$q < \charac(\K)$ then~$\Lambda(q,\K) = \{q\}.$
\end{rem}

\begin{mydef}\label{def:secres}
Let~$\K$ be a field, ~$q \geq 1$ an integer, and~$f$ a power series with coefficients in~$\K$ such that~$f(0)=0$ and~$q\=\mult(f)-1\geq1$. For each element~$\ell_j$ in~$\Lambda(q,\K)$ we denote by~$\pind_j(f)$ the coefficient of~$\frac{1}{z}$ in the Laurent expansion about 0 of%Further, denote by~$\ell$ the smallest nonnegative integer such that~$\ell = q$ in~$\K$. Furthermore, let~$f$ be a power series with coefficients in~$\K$
%Let~$\K$ be a field, and~$f$ a power series with coefficients in~$\K$, such that~$f(0)=0$, and~$q\=\mult(f)-1 \geq 1$. 
%Let~$L(q,\charac(\K))$ be as in \eqref{eq:L}, and~$\ell_i$ be its~$i$th element. We denote by~$\pind_i(f)$ the %element in~$\K$
%Let~$\ell$ be the smallest nonnegative integer such that~$\ell = q$ in~$\K$. Let~$$
%Furthermore, let~$k$ be a nonnegative integer less than or equal to~$\lceil q/p \rceil$. We define the \emph{$k$th residue fixed point index} of~$f$ as the 
%coefficient of~$\frac{1}{z}$ in the Laurent expansion about 0 of 
\[\frac{z^{q-\ell_j}}{z-f(z)}.\] In particular, in the case that~$j=1$ then we say that~$\pind_j( f)$ is the \emph{second residue fixed point index}.
\end{mydef}

\begin{rem}
As a consequence by Remark \ref{rem1}, we have that if either~$q < \charac(\K)$ or~$\charac(\K)=0$ then~~$\pind_1(f) = \ind(f)$. %In any other case we have~$\pind_{\lceil q/\charac(\K) \rceil}(f) = \ind(f)$.%This is invariant under coordinate changes by for instance \cite[Proposition 1]{NordqvistRL2019}.
\end{rem}
\begin{rem}
In the complex case for $g \in \widehat{\C}(z)$ such that $g(0)=0$, converging in a neighbourhood of the origin, the residue fixed point index is typically described by the contour integral 
\begin{equation}
  \label{eq:19}
    \ind(g) \coloneqq \frac{1}{2\pi i} \oint_C \frac{\dd z}{z - f(z)},
\end{equation}
where $C$ is a small, closed, simple curve in positive direction around the origin. The formal definition in Definition \ref{def:secres} using the theory of abstract residues (\emph{cf.} \cite[Exercise 5.10]{Silverman2007}) is equivalent in the case that $f$ is convergent in a neighbourhood of the origin.
\end{rem}
In  \S\ref{app:closed} we give a closed formula to compute $\pind_j(f)$ in terms of its first significant coefficients. For a wildly ramified power series~$f$ the residue fixed point index is invariant under local change of coordinates. This statement is also true for~$\pind_1(f)$ as manifested by the following result.
\begin{prop}
  \label{conj}
  Let~$\K$ be a field.
  Then, among power series~$f$ with coefficients in~$\K$ and satisfying~$q\=\mult(f)-1 \in [1,\infty)$, the second residue fixed point index is invariant under coordinate changes.
  That is, for every power series~$h$ with coefficients in~$\K$ such that~$h(0) = 0$ and~$h'(0) \neq 0$, the power series~$\widehat{f} \= h \circ f \circ h^{-1}$ satisfies
  \begin{displaymath}
     \pind_1(\widehat{f}) = h'(0)^{\ell-q}\pind_1(f),
  \end{displaymath}
  where~$\ell$ is the smallest nonnegative integer satisfying~$\ell = q$ in~$\K$.
\end{prop}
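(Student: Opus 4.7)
The plan is to realize $\pind_1(f)$ as the residue at the origin of the formal one-form $\omega_f \coloneqq \frac{z^{q-\ell}}{z-f(z)}\,\dd z$ (writing $\ell \coloneqq \ell_1$) and to track how this form transforms under the substitution $w = h(z)$, with the decisive step being a Frobenius-style identity in characteristic $p$.

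First I would carry out the change of variables. Using $\widehat{f}(h(z)) = h(f(z))$ together with $\dd w = h'(z)\,\dd z$, the pullback of $\omega_{\widehat f}$ along $w = h(z)$ equals $\frac{h(z)^{q-\ell}\,h'(z)}{h(z) - h(f(z))}\,\dd z$. Factoring $h(u) - h(v) = (u-v)\,H(u,v)$ for a two-variable formal power series $H$ with $H(u,u) = h'(u)$, and using that $f(z) \equiv z \pmod{z^{q+1}}$, one gets $H(z, f(z)) \equiv h'(z) \pmod{z^{q+1}}$. Hence the pullback rewrites as $\frac{h(z)^{q-\ell}}{z - f(z)}\bigl(1 + O(z^{q+1})\bigr)\,\dd z$, and since $\frac{1}{z - f(z)}$ has a pole of order exactly $q+1$, the $O(z^{q+1})$ correction contributes nothing to the residue. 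The problem reduces to computing $\operatorname{Res}_0 \frac{h(z)^{q-\ell}}{z - f(z)}\,\dd z$.

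The combinatorial heart is then to compare $h(z)^{q-\ell}$ with $h'(0)^{q-\ell} z^{q-\ell}$ modulo the terms that the pole $(z-f(z))^{-1}$ can see. Expand $h(z) = h'(0)\,z\,(1 + h_1 z + h_2 z^2 + \cdots)$, so that $h(z)^{q-\ell} = h'(0)^{q-\ell}\,z^{q-\ell}\,(1 + h_1 z + \cdots)^{q-\ell}$. If $\charac(\K) = 0$ or $q < \charac(\K)$ then $\ell = q$ and the extra factor disappears. Otherwise $\charac(\K) = p > 0$ and, by construction of $\ell_1$, both $p \mid (q - \ell)$ and $0 \leq \ell < p$ hold. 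Iterating the identity $(a_0 + a_1 z + \cdots)^p = a_0^p + a_1^p z^p + \cdots$ shows that $(1 + h_1 z + \cdots)^{q-\ell}$ is a power series in $z^p$, hence congruent to $1$ modulo $z^{\ell+1}$. Since $\frac{1}{z-f(z)}$ has pole of order $q+1$, only the monomials of $h(z)^{q-\ell}$ of degree in $\{q-\ell,\ldots,q\}$ contribute to the residue, and by the congruence only the leading term $h'(0)^{q-\ell} z^{q-\ell}$ survives. This gives the transformation law with scaling factor $h'(0)^{q-\ell}$, matching the proposition up to the convention chosen for the direction of conjugation (conjugation by $h^{-1}$ instead of $h$ turns the exponent into $\ell - q$).

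The main obstacle is the combinatorial Frobenius step: one must carefully verify that every coefficient of $(1 + h_1 z + \cdots)^{q-\ell}$ in degrees $1$ through $\ell$ vanishes, using both $p \mid (q - \ell)$ and $\ell < p$. This is the distinctive characteristic-$p$ phenomenon that makes $\pind_1$ into a well-defined invariant on wildly ramified power series; in characteristic zero only the trivial case $q = \ell$ occurs, and one recovers the classical invariance of the residue fixed point index.
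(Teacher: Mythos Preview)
Your argument is correct and takes a genuinely different route from the paper's. The paper derives Proposition~\ref{conj} as the special case $j=1$ of Proposition~\ref{prop:conj2}: it rewrites $\widehat{\Delta}$ as $(\Delta\circ h)/h'$ modulo high order, expands $1/\Delta$ as a Laurent series $\sum a_i h(z)^i$, and then invokes Lemma~\ref{lemm:powersarezero} to evaluate the residue of each term $z^{q-\ell}h'(z)/h(z)^{N+1}$, using that $h'/h^{N+1}$ is a derivative when $p\nmid N$. Your approach instead interprets $\pind_1$ as the residue of the one-form $\omega_f$, pulls back along $w=h(z)$ (implicitly using the coordinate-invariance of formal residues, which holds in any characteristic), and then applies the Frobenius identity to see that $h(z)^{q-\ell}=\gamma^{q-\ell}z^{q-\ell}\cdot(\text{power series in }z^p)$, so that only the leading monomial contributes because $\ell<p$. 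This is more conceptual and shorter for the case $j=1$, but it relies crucially on $\ell_1<p$; the paper's lemma-based approach is what allows the extension to $\pind_j$ with $j>1$ in Proposition~\ref{prop:conj2}, where $\ell_j\ge p$ and the Frobenius shortcut no longer suffices by itself. Your remark about the exponent is also on point: the paper's proof actually manipulates $f\circ h = h\circ\widehat{f}$, i.e.\ the convention $\widehat{f}=h^{-1}\circ f\circ h$, which yields $h'(0)^{\ell-q}$; with the stated convention $\widehat{f}=h\circ f\circ h^{-1}$ your computation correctly gives $h'(0)^{q-\ell}$.
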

%\begin{rem}
%The second residue fixed point index is clearly an invariant under the action of conjugation by a wildly ramified power series.
%\end{rem}
In fact, we can say more, and the proof of Proposition \ref{conj} follows by the following generalization.
\begin{prop}\label{prop:conj2}
Let~$\K$ be a field,~$q\geq 1$ an integer, and~$f$ a power series, with coefficients in~$\K$, of multiplicity~$q+1$. If it exists, denote by~$j$ the smallest integer such that~$\pind_j(f) \neq 0$. In that case, for every power series~$h$ with coefficients in~$\K$ such that~$h(0) = 0$ and~$h'(0) \neq 0$, the power series~$\widehat{f} \= h \circ f \circ h^{-1}$ satisfies
  \begin{displaymath}
     \pind_j(\widehat{f}) = h'(0)^{\ell_j-q}\pind_j(f),
  \end{displaymath}
\end{prop}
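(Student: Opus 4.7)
The plan is to treat $\pind_j$ as a formal residue,
\[
\pind_j(f) = \operatorname{Res}_0 \frac{z^{q-\ell_j}}{z - f(z)}\,\dd z,
\]
and to push this through the formal change of variables $w = h(z)$ induced by the automorphism $h$. Since $h'(0) \neq 0$ and $\widehat{f}(h(z)) = h(f(z))$, the change-of-variables formula for formal residues yields
\[
\pind_j(\widehat{f}) = \operatorname{Res}_0 \frac{h(z)^{q-\ell_j}\,h'(z)}{h(z) - h(f(z))}\,\dd z.
\]

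Next, I would clear the denominator. The formal identity $h(z) - h(y) = (z-y)\,H(z,y)$ with $H(z,z) = h'(z)$ factors $h(z) - h(f(z)) = (z - f(z))\,h'(z)\,\Psi(z)$, where $\Psi(z) = 1 + O(z^{q+1})$ because $f(z) - z = O(z^{q+1})$. Cancelling $h'(z)$ and noting that $\Psi(z)^{-1} - 1 = O(z^{q+1})$ while $(z-f(z))^{-1}$ has a pole of order exactly $q+1$, the $\Psi$-correction multiplies to a bona fide power series and contributes nothing to the $z^{-1}$ coefficient. This leaves
\[
\pind_j(\widehat{f}) = \operatorname{Res}_0 \frac{h(z)^{q-\ell_j}}{z - f(z)}\,\dd z.
\]

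The heart of the argument is extracting the leading contribution $h'(0)^{q-\ell_j}\,\pind_j(f)$ and showing the remainder vanishes by the minimality hypothesis on $j$. Write $h(z) = \lambda z\,(1+\eta(z))$ with $\lambda = h'(0)$ and $\eta(z) = O(z)$, so that $h(z)^{q-\ell_j} = \lambda^{q-\ell_j} z^{q-\ell_j}(1+\eta(z))^{q-\ell_j}$. The decisive point is that $\ell_j \in \Lambda(q,\K)$ forces $q - \ell_j \equiv 0 \pmod{p}$ (with $p = \charac(\K)$; the characteristic-zero situation being the trivial case $\ell_j = q$), so by Frobenius
\[
(1+\eta(z))^{q-\ell_j} = (1 + \eta(z)^p)^{(q-\ell_j)/p},
\]
which is a power series in $z^p$. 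Expand the right-hand side as $1 + \sum_{n \geq 1}\Phi_n z^{pn}$: the constant $1$ produces $\lambda^{q-\ell_j}\,\pind_j(f)$, and each $\Phi_n z^{pn}$ contributes $\Phi_n\,\operatorname{Res}_0 \frac{z^{q-(\ell_j - pn)}}{z - f(z)}\,\dd z$. For $pn > \ell_j$ the integrand is regular at $0$, so the residue vanishes; for $pn \leq \ell_j$, one has $\ell_j - pn \in [0,q]$ with $\ell_j - pn \equiv q \pmod p$, whence $\ell_j - pn = \ell_i$ for some $i < j$, and the residue equals $\pind_i(f) = 0$ by the minimality of $j$. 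Summing, the stated identity follows.

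The main obstacle is exactly this Frobenius collapse: without the congruence $q - \ell_j \equiv 0 \pmod p$, the binomial expansion of $(1+\eta)^{q-\ell_j}$ would produce residues of $z^{q-\ell_j+n}/(z-f(z))$ for every $n \in \{1,\dots,q-\ell_j\}$, and those whose index $\ell_j - n$ falls outside $\Lambda(q,\K)$ are neither among the $\pind_i$'s nor otherwise controlled. It is precisely the fact that only the exponents divisible by $p$ survive in characteristic $p$ that channels every surviving correction into some $\pind_i(f)$ with $i < j$, killed by hypothesis.
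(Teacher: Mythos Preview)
Your argument is correct and takes a different route from the paper's. Both proofs begin by relating $\widehat\Delta$ to $\Delta$ through $h$ (your factorization $h(z)-h(f(z))=(z-f(z))\,h'(z)\,\Psi(z)$ with $\Psi=1+O(z^{q+1})$ is exactly the paper's congruence $\Delta\circ h\equiv h'\cdot\widehat\Delta\bmod\langle z^{2q+2}\rangle$), and both finish by using the minimality of $j$ to kill the contributions from $\pind_i(f)$ with $i<j$. The difference is in the middle. The paper expands $h'/(\Delta\circ h)$ as $\sum_i a_i\,h(z)^i h'(z)$ and then invokes a separate preparatory lemma (Lemma~\ref{lemm:powersarezero}) showing that for $d$ divisible by $p$ the residue of $z^d h'(z)/h(z)^{N+1}$ vanishes whenever $p\nmid N$ and equals $h'(0)^{-d}$ when $N=d$; that lemma is proved via an antiderivative trick. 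You instead apply the change-of-variables formula for formal residues up front, reducing to $\operatorname{Res}_0 h(z)^{q-\ell_j}/(z-f(z))$, and then a single Frobenius observation---$(1+\eta)^{q-\ell_j}=(1+\eta^p)^{(q-\ell_j)/p}$ is a power series in $z^p$---forces every surviving correction to be one of the $\pind_i(f)$ with $i<j$. Your route is shorter and more conceptual and dispenses with the auxiliary lemma, at the cost of invoking residue invariance under formal coordinate change, which in characteristic $p$ is a genuine (though standard) fact rather than a triviality.

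One small remark: your computation yields $\pind_j(\widehat f)=\lambda^{\,q-\ell_j}\pind_j(f)$, whereas the proposition displays exponent $\ell_j-q$. The paper's own proof in fact uses the relation $f(h(z))=h(\widehat f(z))$, which is the identity for $\widehat f=h^{-1}\circ f\circ h$ rather than for $\widehat f=h\circ f\circ h^{-1}$ as stated; so the discrepancy is a direction convention in the paper, not an error in your calculation.
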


%\begin{comment}% This is currently not in the arXiv version
\'{E}calle \cite{Ecalle1981b, Ecalle1981} and Voronin \cite{Voronin1981} independently proved that analytic classification of parabolic maps in~$\widehat{\C}(z)$ depends on an infinite number of parameters. However, as previously mentioned, the formal classification soley depends on the multiplicity and the residue fixed point index of the fixed point. Hence, Proposition \ref{conj} is an obstruction for the same situation to be true for formal equivalence in positive characteristic, and as a consequence we obtain the following  corollary.
\begin{corr}\label{c:normalforms}
%If~$q>p$, then
%\[\dim\left(\mathcal{M}^1_q\right) \geq  2.\]
In positive characteristic, the moduli space of formal classification of parabolic maps with large multiplicity is at least three dimensional.
\end{corr}
%In fact, we ask if this could be made stronger in the following question, which %would imply a sharp distinction between the characteristic zero and positive %characteristic formal classifications of parabolic maps.
%\begin{question}
%In positive characteristic, is the moduli space of formal classification of %parabolic maps infinite dimensional?
%\end{question}
%\end{comment}
%\begin{thm}

%\end{thm}

\subsection{Lower ramification numbers}
The second part of our results concerns lower ramification numbers of wildly ramified power series. Let~$n\geq0$ be an integer then the \emph{lower ramification numbers} of a wildly ramified power series~$f$ are defined as \[i_n(f) \= \mult\left(f^{p^n}\right)-1.\] 
Let~$q\= \mult(f)-1$ and denote by~$\ell$ the smallest nonnegative integer such that~$\ell = q$ in~$\K$. Then we have 
\begin{equation}\label{eq:ineqramif}i_n(f) \geq \ell(1 + p +\dots+p^{n-1}) + qp^n,\end{equation} see Proposition \ref{prop:qramifisminramif}, in \S\ref{sec:lowerramif}.
Our primary result in this section is a characterization of power series with large multiplicity, having lower ramification numbers satisfying equality in \eqref{eq:ineqramif}, \emph{i.e.} the smallest possible sequence of lower ramification numbers. We divide the results into two parts: the generic and subgeneric case.

\subsubsection{Generic case}
The following result is a characterization of power series with large multiplicity~$q+1$ having the smallest possible lower ramification numbers. In particular, among power series with multiplicity~$q+1$ these form a generic set.

\begin{thm}\label{thm:criterion}
Let~$\K$ be a field of characteristic~$p$. Further, suppose that~$q > p$ is an integer not divisible by~$p$, and denote by~$\ell$ the smallest nonnegative integer such that~$q=\ell$ in~$\K$. Then for every power series~$f \in \K[[z]]$ such that~$\mult(f) = q+1$, and every integer~$n\geq 1$, the lower ramification numbers of~$f$ are of the form \[i_n(f) = \ell(1+p+\dots+p^{n-1}) + qp^n,\] if and only if~$ \pind_1(f) \neq 0$.
\end{thm}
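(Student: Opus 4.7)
The strategy is to induct on $n$, exploiting the identity $i_n(f) = i_{n-1}(f^p)$, which reduces everything to the base case $n=1$: proving that $i_1(f) = \ell + qp$ if and only if $\pind_1(f) \neq 0$. Once the base case is established, given $\pind_1(f) \neq 0$ I would set $g \coloneqq f^p$, so that $\mult(g) - 1 = q' \coloneqq \ell + qp$. Since $p \nmid q$ forces $\ell \in \{1,\dots,p-1\}$, we have $q' \equiv \ell \pmod{p}$, hence $p \nmid q'$, $q' > p$, and the $\ell$-value attached to $g$ is again $\ell$. Provided I can also propagate $\pind_1(g) \neq 0$, the base case applied to $g$ gives $i_2(f) = i_1(g) = \ell + q' p = \ell(1+p) + qp^2$, and iterating produces the claimed formula for all $n$. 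Conversely, if the formula holds for all $n$, then in particular $i_1(f) = \ell + qp$, and the base case forces $\pind_1(f) \neq 0$.

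For the base case itself, Proposition~\ref{prop:qramifisminramif} already gives $i_1(f) \geq \ell + qp$, so everything hinges on whether the coefficient of $z^{\ell + qp + 1}$ in $f^p(z) - z$ vanishes. I would compute this coefficient via the telescoping identity
\[f^p(z) - z = \sum_{j=0}^{p-1} \bigl(f(f^j(z)) - f^j(z)\bigr),\]
expanding $f(w) - w = a_{q+1} w^{q+1} + a_{q+2} w^{q+2} + \dots$ and each $f^j(z)$ in $z$ up to the required order. Because the linear part of $f$ is the identity, many naive contributions collapse in characteristic $p$, and the survivor at degree $\ell + qp + 1$ is, after careful combinatorial identification using the closed formula of~\S\ref{app:closed}, precisely $\pind_1(f)$ times a nonzero constant depending only on $a_{q+1}$, $\ell$, and $q$. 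Since both $\pind_1(\cdot)$ and the $i_n(\cdot)$ are invariant under formal conjugation (by Proposition~\ref{conj} and the intrinsic definition of the lower ramification numbers), I would first normalize $f$, for instance to $f(z) = z + z^{q+1} + \cdots$ via the substitution $h(z) = \lambda z$, to simplify the bookkeeping.

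The remaining ingredient is the propagation of non-vanishing under $f \mapsto f^p$, so that the induction actually closes. This should follow from the same type of expansion: extending the computation of $f^p(z) - z$ through degree $\ell + qp + 1 + \ell$ and plugging the resulting coefficients into the closed formula for $\pind_1(f^p)$ shows that $\pind_1(f^p)$ equals $\pind_1(f)$ times a nonzero power of $a_{q+1}$, and is therefore nonzero. The main obstacle of the proof is precisely this explicit coefficient computation in characteristic $p$: tracking all contributions to the relevant coefficients of $f^p(z) - z$ and matching them against the polynomial expression for $\pind_1(f)$ is delicate. The hypothesis $p \nmid q$ is essential here, as it guarantees $\ell \geq 1$: this both prevents the leading contributions from degenerating into lower-order terms and keeps the relevant multinomial coefficients nonzero modulo $p$.
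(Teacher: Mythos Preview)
Your strategy is correct and parallels the paper's: both normalize $f$, compute the first two significant coefficients of $f^p(z)-z$, and induct on $n$ (the paper tracks both coefficients of $g^{p^n}$ simultaneously in Proposition~\ref{prop:pn} rather than re-applying the base case to $g=f^p$, but that is cosmetic). Two points deserve comment. First, the paper normalizes more aggressively than your scaling $h(z)=\lambda z$: Lemma~\ref{prop:ind2conj} conjugates away \emph{all} intermediate coefficients, bringing $f$ to the form $z(1+\alpha z^q+\beta z^{q+\ell})\bmod z^{q+2\ell+1}$ with $\pind_1(f)=\beta/\alpha^2$. This two-parameter reduction is what makes the base case tractable, and the resulting computation is Lemma~\ref{lemma:main} (carried out via the operators $\Delta_m$ of~\eqref{eq:Delta}, which satisfy $\Delta_p=f^p-\mathrm{id}$ in characteristic~$p$ and are cleaner here than the raw telescoping sum). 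With only a scaling you would instead have to match a polynomial in all of $a_{q+1},\dots,a_{q+\ell+1}$ against the closed formula of \S\ref{app:closed} directly---doable in principle, but much heavier. Second, your propagation claim is slightly off: in the normalized form Lemma~\ref{lemma:main} gives $\pind_1(g^p)=(-\alpha^{p-2}\beta^2)/(-\alpha^{p-1}\beta)^2=-\alpha^{-p}$, which is nonzero but not $\pind_1(f)$ times a power of the leading coefficient. This is harmless, since you only need $\pind_1(f^p)\neq0$.
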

%\end{rem}
The proof of Theorem \ref{thm:criterion} is given in \S\ref{sec:thmA}.

In a famous paper by Sen \cite{Sen1969} we have that if~$p\mid i_0(f) = q$ then~$i_n(f) = qp^n$, which clearly satisfies equality in \eqref{eq:ineqramif}. Using this fact together with Theorem \ref{thm:criterion} and \cite[Theorem 2]{NordqvistRL2019} we obtain the following corollary, extending \cite[Corollary 1]{NordqvistRL2019} to include power series with large multiplicity.
\begin{corr}
  \label{c:genericity}
  Let~$\K$ be a field of odd characteristic.
  Then, among wildly ramified power series~$f$ with coefficients in~$\K$, those which have the smallest possible lower ramification numbers are generic.
\end{corr}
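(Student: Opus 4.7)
The plan is to stratify the Nottingham group $\mathcal{N}(\K)$ by the integer $q \= \mult(f) - 1$ and prove that, inside each stratum, the set of $f$ attaining equality in \eqref{eq:ineqramif} is either the entire stratum or a set defined by the non-vanishing of a polynomial in finitely many coefficients of $f$. Since the multiplicity is locally constant in the formal-power-series topology, establishing the statement stratum by stratum suffices.

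The argument splits into three cases according to the residue of $q$ modulo $p$ and its size. First, when $p \mid q$, Sen's theorem gives $i_n(f) = q p^n$ for every $f$ with $i_0(f) = q$; since here $\ell = 0$ in \eqref{eq:ineqramif}, the entire stratum is minimal. Second, when $q \leq p$ and $p \nmid q$ (the small-multiplicity regime), \cite[Theorem~2]{NordqvistRL2019} characterizes minimality via an open condition on $\ind(f)$ and \cite[Corollary~1]{NordqvistRL2019} upgrades this to the desired genericity statement. This is the step where the hypothesis that $\charac(\K)$ is odd is used, inherited from the hypotheses of the cited corollary.

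The new contribution is the large-multiplicity regime $q > p$ with $p \nmid q$. Here Theorem~\ref{thm:criterion} identifies the minimal-ramification locus inside the stratum with $\{f : \pind_1(f) \neq 0\}$. By the closed formula of \S\ref{app:closed}, $\pind_1(f)$ is an explicit polynomial in the first $q$ coefficients of $f$, so the locus is Zariski-open; density then follows as soon as a single $f$ of multiplicity $q+1$ with $\pind_1(f) \neq 0$ is exhibited. One reads off such an example from the closed formula, which shows that a specific coefficient of $f$ enters $\pind_1$ linearly with an invertible prefactor, so generic choices of that coefficient produce $\pind_1(f) \neq 0$.

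Combining the three cases over all $q \geq 1$ yields the corollary. The main obstacle is the final step: verifying that the polynomial representing $\pind_1$ is not identically zero on the stratum of multiplicity $q+1$. This should be transparent from the explicit form of the formula in \S\ref{app:closed}, but it is the only point where a degeneration could in principle occur, and therefore the crux of the argument; everything else is a direct appeal to Theorem~\ref{thm:criterion}, to Sen's theorem, and to the prior results of \cite{NordqvistRL2019}.
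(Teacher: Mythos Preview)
Your proposal is correct and follows essentially the same route as the paper: the paper derives the corollary in one sentence by combining Sen's theorem for $p\mid q$, \cite[Theorem~2]{NordqvistRL2019} for small multiplicity, and Theorem~\ref{thm:criterion} for large multiplicity, exactly as you do. Your additional remark that $\pind_1$ is not identically zero on each large-multiplicity stratum (visible either from the closed formula of \S\ref{app:closed} or, more directly, from Lemma~\ref{prop:alphaa} by taking $\beta\neq 0$) is a detail the paper leaves implicit but which is indeed needed to conclude genericity.
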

Corollary \ref{c:genericity} together with \eqref{eq:ineqramif} answers \cite[Question 1]{NordqvistRL2019}, which asks for what the lower ramification numbers for a generic power series with large multiplicity are.
\subsubsection{Subgeneric case}
The following result is a characterization of power series with large multiplicity~$q+1$ which does not necessarily attain the  smallest possible lower ramification numbers.
\begin{thm}\label{thm:criterion2}
Let~$\K$ be a field of characteristic~$p$. Further, suppose that~$q \ge p+1$ is an integer not divisible by~$p$,~$\ell$ the smallest nonnegative integer such that~$\ell = q$ in~$\K$. Finally, let~$j$ be a positive integer such that~$j \in \{1, 2,\dots, \lceil q/p \rceil-1\}$, and put~$\ell_j \= \ell + (j-1)p$. Then for every power series~$f \in \K[[z]]$ such that~$\mult(f) = q+1$, and every integer~$n\geq 1$, the lower ramification numbers of~$f$ are of the form \[i_n(f) = \ell_j(1+p+\dots+p^{n-1}) + qp^n.\] if and only if~$j$ is the smallest integer such that~$\pind_j(f) \neq 0$.
\end{thm}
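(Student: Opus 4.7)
The plan is to bootstrap from Theorem \ref{thm:criterion} (the case $j=1$) by passing to the $p$-th iterate. Since $i_{n+1}(f) = i_n(f^p)$, and by Proposition \ref{prop:conj2} the least index $j$ with $\pind_j(f) \neq 0$ is invariant under local conjugation, the theorem reduces to two subsidiary claims: first, the base case that $i_1(f) = \ell_j + qp$ whenever $j$ is the least index with $\pind_j(f) \neq 0$; and second, that under the same hypothesis, $\pind_1(f^p) \neq 0$, so that Theorem \ref{thm:criterion} applies to $f^p$ with updated multiplicity $q' + 1 = \ell_j + qp + 1$ and updated congruence representative $\ell' = \ell_j$ (since $p = 0$ in $\K$).

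For the base case, I would normalize $f$ using Proposition \ref{prop:conj2} together with the closed formula from \S\ref{app:closed}, so that the conditions $\pind_1(f) = \cdots = \pind_{j-1}(f) = 0$ become explicit polynomial conditions on a short prefix of coefficients $a_{q+1}, \dots, a_{q+\ell_j+1}$ of $f(z) = z + \sum_{k \geq q+1} a_k z^k$. I would then expand $f^p(z) - z$ coefficientwise, exploiting that many multinomial coefficients vanish modulo $p$, and verify that the coefficient of $z^m$ vanishes for every $qp + 1 < m \leq \ell_j + qp$ (each such coefficient being a polynomial in the $a_k$ which is forced to be zero by the vanishing of $\pind_1, \dots, \pind_{j-1}$), while the coefficient of $z^{\ell_j + qp + 1}$ equals a nonzero scalar multiple of $\pind_j(f)$. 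This yields $\mult(f^p) = \ell_j + qp + 1$, establishing the base case.

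For the second claim, the same expansion should identify $\pind_1(f^p)$ as a nonzero scalar multiple of $\pind_j(f)$ under the vanishing hypothesis, after substituting the leading coefficients of $f^p$ into the closed formula from \S\ref{app:closed}. Combining both claims with an induction on $n$ using Theorem \ref{thm:criterion} applied to $f^p$, one obtains $i_n(f^p) = \ell_j(1 + p + \cdots + p^{n-1}) + (\ell_j + qp)p^n$, whence $i_{n+1}(f) = \ell_j(1 + p + \cdots + p^n) + qp^{n+1}$, as desired. The reverse implication then follows by contraposition: a different least nonvanishing index $j' \neq j$ would force a different, strictly larger, value of $i_1(f)$, which then propagates to all $i_n$.

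The main obstacle is the explicit multinomial computation in the second paragraph: identifying precisely which monomials in the $a_k$ appear as the coefficient of $z^{\ell_j + qp + 1}$ in $f^p - z$, and matching them against the closed-form expression for $\pind_j(f)$. This is where the hypothesis $j \leq \lceil q/p \rceil - 1$ is essential: it guarantees $\ell_j < q$, so that the relevant coefficients of $f^p$ depend only on the ``head'' of $f$ controlled by $\pind_1, \dots, \pind_j$ and are not contaminated by higher-order data that would render the characteristic-$p$ cancellations infeasible to track.
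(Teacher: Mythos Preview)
Your bootstrap via Theorem~\ref{thm:criterion} breaks for $j>1$ because of a mis-identification of the ``updated congruence representative''. With $q' \coloneqq i_0(f^p) = qp + \ell_j$, the smallest nonnegative integer congruent to $q'$ modulo~$p$ is $\ell$, \emph{not} $\ell_j$: since $\ell_j = \ell + (j-1)p$ and $0 \le \ell \le p-1$, one has $q' \equiv \ell \pmod{p}$ with $\ell$ minimal. Thus Theorem~\ref{thm:criterion} applied to $f^p$ would yield
\[
  i_n(f^p) = \ell(1+p+\cdots+p^{n-1}) + q'p^n,
\]
which disagrees with the target $i_{n+1}(f) = \ell_j(1+p+\cdots+p^{n-1}) + q'p^n$ whenever $j>1$. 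In fact your second subsidiary claim is false: for $j>1$ one has $\pind_1(f^p)=0$. Indeed, if $\pind_1(f^p)\neq 0$ then Theorem~\ref{thm:criterion} forces $i_1(f^p)=\ell+q'p$, contradicting the (correct) value $i_2(f)=\ell_j+q'p$ that your own base-case computation together with Sen's theorem already pins down. What actually happens is that the least nonvanishing index for $f^p$ is again~$j$, so appealing to the $j=1$ case of the theorem gains nothing; you would need Theorem~\ref{thm:criterion2} for $f^p$, which is circular.

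The paper circumvents this by not iterating at all beyond the first step. After conjugating $f$ to a short normal form via Lemma~\ref{prop:ind2conj} and computing $i_1(f)=qp+\ell_j$ directly (Proposition~\ref{prop:deltaiteratesshort}, via the $\Delta$-operators rather than raw multinomial expansion), it invokes the Laubie--Sa\"{\i}ne result (Proposition~\ref{laubiecorr}): since $p\nmid q$ and $i_1(f)=qp+\ell_j < q(p^2-p+1)$, the entire sequence $(i_n(f))_{n\ge 0}$ is determined by $i_0$ and $i_1$, giving the claimed formula in one stroke. Your base-case computation is in the right spirit, but the passage from $n=1$ to general~$n$ needs this external input (or an equivalent), not a reduction to Theorem~\ref{thm:criterion}.
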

We note that Theorem \ref{thm:criterion} follows from Theorem \ref{thm:criterion2}. However, as Theorem \ref{thm:criterion} is the generic case we consider it of value to be highlighted. Also the proofs of Theorem \ref{thm:criterion} and Theorem \ref{thm:criterion2} differ, since Theorem \ref{thm:criterion} follows by the invariance of $\pind_1(f)$ together with \cite[Main Lemma]{NordqvistRL2019}, and is thus self-contained. However,  Theorem \ref{thm:criterion2} does not follow as a consequence by the aforementioned lemma, and we give a direct proof depending on a known result by Laubie and Sa\"{i}ne  \cite[Corollary 1]{LaubieSaine1998}. The proof of Theorem \ref{thm:criterion2} is given in \S\ref{sec:pplus}. 

We conclude this section by defining the concept of~$q$-ramification and state our results regarding these matters.
\begin{mydef}
Let~$\K$ be a field of positive characteristic. Further, let~$f\in \K[[z]]$ be a wildly ramified series power of the form~$\mult(f) = q + 1$. If the lower ramification numbers of~$f$ satisfies \[i_n(f) = q(1 + p + \cdots + p^n),\] we say that~$f$ is~$q$-ramified.
\end{mydef}
%\begin{rem}
%The characterization of~$q$-ramified power series for~$q < p$ is given in~\cite[Theorem 2]{NordqvistRL2019}, and the case~$q > p$ answers a question posed in the same paper \cite[Question 1]{NordqvistRL2019}.
%\end{rem}
Hence, partly as a consequence of Theorem \ref{thm:criterion2} we obtain the following corollary. %***
%The following result is a characterization of~$q$-ramified power series with large multiplicity satisfying~$q \in \{p+1,\dots, 2p-1\}$. This is -- to our knowledge -- a first characterization of power series forming a subgeneric set, in terms of its lower ramification numbers.
\begin{corr}\label{qramiflarge}
Let~$\K$ be a field of odd characteristic~$p$,~$q$ an integer not divisible by~$p$, such that~$q\ge p+1$, and put~$s \= \lceil q/p \rceil$. Furthermore, let~$f \in \K[[z]]$ be a wildly ramified power series of multiplicity~$q+1$. Then~$f$ is~$q$-ramified if and only if~\[\frac{q+1}{2}- \ind(f) \neq 0,\footnote{\,The left hand side is known as the \emph{r\'{e}sidu it\'{e}ratif}, or iterative residue and is introduced in \S\ref{sec:properties}.}\] and for every positive integer~$j < s$ we have~$\pind_j(f)= 0$.
\end{corr}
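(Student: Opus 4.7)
The strategy is to combine Theorem~\ref{thm:criterion2} with the classical $q$-ramification criterion of Laubie and Sa\"{i}ne \cite[Corollary 1]{LaubieSaine1998}, which detects $q$-ramification through the nonvanishing of the iterative residue. I would split the argument into the two implications.

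\textbf{Forward direction.} Assume $f$ is $q$-ramified, so $i_n(f)=q(1+p+\cdots+p^n)$ for every $n\ge 1$. For any $j\in\{1,\dots,s-1\}$, the subgeneric value $\ell_j(1+p+\cdots+p^{n-1})+qp^n$ cannot equal $q(1+p+\cdots+p^n)$, since equality would force the integer identity $\ell_j=q$; but $\ell_j=\ell+(j-1)p<\ell+(s-1)p=q$. Hence the contrapositive of the ``only if'' direction of Theorem~\ref{thm:criterion2}, applied for each such $j$, yields $\pind_j(f)=0$, and the nonvanishing of the iterative residue is then supplied by the Laubie-Sa\"{i}ne criterion.

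\textbf{Reverse direction.} Assume both hypotheses. Theorem~\ref{thm:criterion2} first rules out every subgeneric ramification sequence: for no $j<s$ does $i_n(f)$ coincide with $\ell_j(1+p+\cdots+p^{n-1})+qp^n$. Combined with the lower bound $(\ref{eq:ineqramif})$, this leaves the $q$-ramified sequence as the smallest admissible candidate consistent with the vanishing hypothesis, and the condition $\tfrac{q+1}{2}-\ind(f)\ne 0$ pins it down via \cite[Corollary 1]{LaubieSaine1998}.

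\textbf{Main obstacle.} The delicate step is the reverse direction: Theorem~\ref{thm:criterion2} only excludes sequences and does not, on its own, positively identify the $q$-ramified one. Bridging this gap requires carefully aligning the vanishing hypotheses $\pind_j(f)=0$ for $j<s$ with the precise setup in which the Laubie-Sa\"{i}ne iterative-residue criterion is applicable in the large-multiplicity regime $q\ge p+1$. This is where the bulk of the work lies, and it is exactly the dependence on \cite[Corollary 1]{LaubieSaine1998} foreshadowed in the proof of Theorem~\ref{thm:criterion2}.
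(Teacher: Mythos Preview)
Your use of Theorem~\ref{thm:criterion2} to handle the indices $j<s$ is correct, but the proposal has a genuine gap stemming from a mischaracterization of \cite[Corollary~1]{LaubieSaine1998}. That result (stated in the paper as Proposition~\ref{laubiecorr}) says only that if $p\nmid i_0(f)$ and $i_1(f) < (p^2-p+1)i_0(f)$, then all higher $i_n(f)$ are determined by $i_0(f)$ and $i_1(f)$; it makes no reference whatsoever to the iterative residue. There is no ``Laubie--Sa\"{i}ne criterion detecting $q$-ramification through the nonvanishing of the iterative residue''---such a characterization is known only for small multiplicity $q<p$ (via \cite{NordqvistRL2019}), and extending it to the large-multiplicity regime $q\ge p+1$ is precisely the content of the corollary you are trying to prove. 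By invoking it as an input you are, in effect, assuming the conclusion.

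The missing ingredient is a direct computation. Once $\pind_j(f)=0$ for all $j<s$, Lemma~\ref{prop:ind2conj} conjugates $f$ to the form $z(1+\alpha z^q+\beta z^{2q})\bmod\langle z^{2q+2}\rangle$ with $\ind(f)=\beta/\alpha^2$, and the $\ell_j=q$ case of Proposition~\ref{prop:deltaiteratesshort} then gives
\[
f^p(z)-z \;\equiv\; \alpha^{p+1}\!\left(\tfrac{q+1}{2}-\tfrac{\beta}{\alpha^2}\right)z^{q(p+1)+1}\ \bmod\ \langle z^{q(p+1)+2}\rangle.
\]
This identifies the leading coefficient of $f^p(z)-z$ with $\alpha^{p+1}\resit(f)$, so $i_1(f)=q(p+1)$ if and only if $\resit(f)\neq 0$; only at that point does Proposition~\ref{laubiecorr} legitimately propagate the conclusion to all $i_n(f)$. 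You need this computation in \emph{both} directions: in the forward direction to deduce $\resit(f)\neq 0$ from $q$-ramification, and in the reverse direction to deduce $i_1(f)=q(p+1)$ from $\resit(f)\neq 0$. Theorem~\ref{thm:criterion2} alone cannot supply it, since its hypothesis restricts to $j\le s-1$ and says nothing about the boundary case $\ell_s=q$.
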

%The proof of Corollary \ref{qramiflarge} is given in \S\ref{sec:pplus}.
%\begin{rem}
%As noted in a previous remark the formula is further valid for every power series such that its multiplicity is divisible by~$p$.

\subsection{Organization of the paper}
In \S\ref{sec:apps} we discuss some applications of the main results related to one-dimensional dynamical systems over non-archimedean fields. In \S\ref{sec:properties} we prove the invariance of the second residue fixed point index and its generalization $\pind_j(f)$, and we also discuss some properties of these invariants. Further, in \S\ref{sec:lowerramif} we give a proof of Theorem \ref{thm:criterion}, Theorem \ref{thm:criterion2} and Corollary \ref{qramiflarge}. In \S\ref{app:closed} we give a closed formula for $\pind_j(f)$ in terms of the first significant coefficients of the power series $f$. Finally, in Appendix \ref{app:perpoints} we give a proof of the results presented in \S\ref{sec:apps}.
%In~\S\ref{sec:properties} we discuss properties of the second residue fixed point index, and give a proof of Proposition \ref{conj}. In \S\ref{sec:lowerramif}, we discuss the results from \S\ref{sec:properties} in relation to wildly ramified power series. Finally in \S\ref{s:Main Lemma} we prove Theorem \ref{thm:criterion}.

%\section{Preliminaries}\label{sec:prel}
\section{Applications}\label{sec:apps}
As an application of our results we compute the exact minimal distance between periodic points in the open unit disk of a convergent wildly ramified power series with large multiplicity  and the origin. To state this result, we introduce some notation.
Given an non-archimedean field $(\K,|\cdot|)$, denote by $\mathcal{O}_{\K}$ its closed unit disk and by $\mathfrak{m}_{\K}$ its open unit disk respectively, \emph{i.e.}
\begin{displaymath}
  \mathcal{O}_{\K}
  \=
  \{ z \in \K : |z| \le 1 \},
  \text{ and }
  \mathfrak{m}_{\K}
  \=
  \{ z \in \K : |z| < 1 \}.
\end{displaymath}
%the ring of integers of~$\K$ and the maximal ideal of~$\mathcal{O}_{\K}$, respectively.

\begin{thm2}
  \label{thm:lower-bound}
  Let~$p$ be a prime number, let~$q\geq p+1$ be an integer not divisble by $p$, and let~$(\K, |\cdot|)$ be an non-archimedean field of characteristic~$p$.
  Furthermore, let~$f$ be a power series with coefficients in~$\mathcal{O}_{\K}$ of the form
  \[f(z)
    \equiv
    z(1 + az^q) \mod \langle z^{q+2} \rangle, \text{ with } a\neq0.\]
  Then, for every fixed point~$z_0$ of~$f$ in~$\mathcal{O}_{\K}$ that is different from~$0$ we have $|z_0|\geq |a|$, and for every periodic point~$z_0$ of~$f$ in~$\mathcal{O}_{\K}$ that is not a fixed point, we have
  \begin{equation}\label{normbound}
    |z_0|
    \ge
    |a| \cdot |\pind_1(f)|^\frac{1}{p}.
  \end{equation}
\end{thm2}

Again, we stress that we have a closed formula for $\pind_1(f)$ given in  \S\ref{app:closed}. We also note that the proof of Theorem \ref{thm:lower-bound} is almost verbatim the same proof as the proof of \cite[Theorem 3]{NordqvistRL2019} with $\resit(f)$ replaced by $\pind_1(f)$. However, even though this proof contains little to no new information, for completion, the proof is given in Appendix \ref{app:perpoints}. As a consequence of Theorem \ref{thm:lower-bound}, thus covering the case of large multiplicity, we obtain the following strengthening of \cite[Main Theorem]{LindahlRiveraLetelier2015} and \cite[Corollary 4]{NordqvistRL2019}.

\begin{corr}
  \label{c:generic isolation}
  Let~$p$ be a prime number and fix an integer~$m\geq 2$, such that $m\not\equiv 1 \pmod{p} $.
  Then, over a field of characteristic~$p$, a generic fixed point of multiplicity~$m$ is isolated as a periodic point.
\end{corr}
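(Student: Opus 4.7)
The plan is to split the proof into two cases depending on the size of $m$ relative to $p$ and, in each case, reduce the claim to a quantitative lower bound on the absolute value of periodic points near the origin. Since $m \ge 2$ and $m \not\equiv 1 \pmod{p}$, either $2 \le m \le p$ or $m \ge p+2$. The small-multiplicity range $2 \le m \le p$ is precisely the content of \cite[Corollary 4]{NordqvistRL2019}, so nothing new is required there and I would simply invoke it.

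For the remaining large-multiplicity case $m \ge p+2$, set $q \= m-1 \ge p+1$, so that $p \nmid q$ and the hypotheses of Theorem \ref{thm:lower-bound} are in force. After conjugating to place the fixed point at the origin, the condition $\mult(f) = q+1$ becomes $f(z) \equiv z(1 + az^q) \pmod{z^{q+2}}$ with $a \ne 0$. On the locus where additionally $\pind_1(f) \ne 0$, Theorem \ref{thm:lower-bound} provides the strictly positive lower bound $|a|\cdot|\pind_1(f)|^{1/p}$ on every non-fixed periodic point in $\mathcal{O}_{\K}$, together with the bound $|a|$ on every nonzero fixed point; together these force $0$ to be isolated as a periodic point.

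It therefore remains to verify that $\{\pind_1 \ne 0\}$ is a nonempty Zariski-open subset of the coefficient space of power series with $f(0)=0$, $f'(0)=1$, $\mult(f)=q+1$. Openness is immediate from the closed formula of \S\ref{app:closed}, which expresses $\pind_1(f)$ as a polynomial in finitely many leading coefficients of $f$. For non-triviality I would exhibit an explicit one-parameter family, for instance $f(z) = z + z^{q+1} + c\, z^{q+\ell+1}$, where $\ell \in \{1,\dots,p-1\}$ is the smallest nonnegative integer with $\ell \equiv q \pmod{p}$. For this $f$ one has $z - f(z) = -z^{q+1}(1 + c\, z^{\ell})$, so the Laurent expansion of $z^{q-\ell}/(z-f(z))$ is readily computed and yields $\pind_1(f) = c$, which is nonzero for generic $c$.

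The main obstacle is not the quantitative estimate---already supplied in full by Theorem \ref{thm:lower-bound}---but the genericity step: one must be sure that the condition $\pind_1(f) \ne 0$ really cuts out a nonempty Zariski-open subset of the parameter space, rather than being an identically vanishing condition that would render the reduction vacuous. The explicit family above dispenses with this worry, after which the corollary follows essentially in one line from Theorem \ref{thm:lower-bound}.
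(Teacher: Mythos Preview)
Your approach is correct and matches the paper's: the paper does not give a separate proof of this corollary but presents it immediately after Theorem~\ref{thm:lower-bound} as its consequence ``covering the case of large multiplicity'' and strengthening \cite[Corollary~4]{NordqvistRL2019}. Your case split ($2\le m\le p$ via \cite{NordqvistRL2019}, $m\ge p+2$ via Theorem~\ref{thm:lower-bound}) and your verification that $\{\pind_1\neq 0\}$ is nonempty Zariski-open are exactly the steps the paper leaves implicit; your explicit family agrees with Lemma~\ref{prop:alphaa}, which already gives $\pind_1(f)=\beta/\alpha^2$ for $f(z)\equiv z(1+\alpha z^q+\beta z^{q+\ell})$.
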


\section{The second residue fixed point index and its generalizations}\label{sec:properties}
This section is divided into two parts. In the first part we give a proof that the second residue fixed point index invariant under local change of coordinates, and in the case that~$\pind_1(f)=0$ the first~$\pind_j(f)$ which is nonvansihing is also invariant. In the second part we discuss some properties of the second residue fixed point index and its generalizations. Before proceeding we give some definitions that will be used throughout the paper. 

Given a ring~$R$ and elements~$a_1, \ldots, a_n$ of~$R$, denote by~$\langle a_1,\ldots,a_n \rangle$ the ideal generated by~$a_1, \ldots, a_n$.
Furthermore, denote by~$R[[z]]$ the ring of power series with coefficients in~$R$ in the variable~$z$, and denote by~$\ord_z$ the~$z$-adic valuation on~$R[[z]]$. Note that for~$f = 0$ we have~$\ord_z(0) \= +\infty$.

\subsection{Proof of Proposition \ref{conj} and \ref{prop:conj2}}
This section is devoted to prove Proposition \ref{conj} and \ref{prop:conj2}. As noted before the former follows from the latter as a special case and the proof of Proposition \ref{prop:conj2} is given after the following lemma.

\begin{lemma}\label{lemm:powersarezero}
  Let~$\K$ be a field of odd characteristic~$p$ and~$h$ a power series with coefficients in~$\K$ such that~$h(0) = 0$ and~$h'(0) \neq 0$.
  Then for every integer~$N\geq 1$ and every integer~$d \geq1$ divisible by~$p$, the coefficient~$c$ of~$\frac{1}{z}$ in the Laurent series expansion about~$0$ of
  \begin{equation}\label{zd}z^d\cdot\frac{h'(z)}{h(z)^{N+1}}\end{equation} satisfies the following properties
  \begin{enumerate}
      \item if~$N$ is \emph{not} divisible by~$p$, then~$c=0$,
      \item if~$N = d$, then~$c = h'(0)^{-d}.$
  \end{enumerate}
\end{lemma}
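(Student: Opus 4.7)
The plan is to prove the two parts by quite different arguments, each exploiting the divisibility hypotheses in a tailored way. Note first that because $h'(0)\neq 0$, the series $h(z)$ has $z$-adic valuation $1$, so $h(z)^{-N}$ lies in $z^{-N}\K[[z]]$ and every expression in the statement is a bona fide formal Laurent series.

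For part~(1), where $p\nmid N$, I would use the antiderivative identity
\begin{equation*}
\frac{z^d\,h'(z)}{h(z)^{N+1}} \;=\; \frac{d}{N}\cdot\frac{z^{d-1}}{h(z)^N} \;-\; \frac{1}{N}\frac{d}{dz}\!\left(\frac{z^d}{h(z)^N}\right),
\end{equation*}
obtained by applying the quotient rule to $z^d/h(z)^N$ and rearranging; it is a valid identity of Laurent series precisely because $N$ is invertible in $\K$. Two elementary observations then finish the job. First, the coefficient of $z^{-1}$ in the formal derivative of \emph{any} Laurent series is automatically zero, since $\frac{d}{dz}\sum a_k z^k = \sum k\,a_k z^{k-1}$ contributes $0\cdot a_0$ at the relevant index. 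Second, the scalar $d/N$ vanishes in $\K$ because $p\mid d$. Combining these two observations yields $c=0$.

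For part~(2), where $N=d$ (and hence $p\mid N$, so the antiderivative trick is unavailable), I would factor $h(z) = h'(0)\,z\,u(z)$ with $u(0)=1$. Substituting directly,
\begin{equation*}
\frac{z^d\,h'(z)}{h(z)^{d+1}}
\;=\;
\frac{h'(z)}{h'(0)^{d+1}\, z\, u(z)^{d+1}},
\end{equation*}
so the coefficient of $z^{-1}$ is the constant term of $h'(z)\big/\bigl(h'(0)^{d+1}u(z)^{d+1}\bigr)$. Since $h'(0)$ is the constant term of $h'$ and $u(0)=1$, this equals $h'(0)/h'(0)^{d+1} = h'(0)^{-d}$, as claimed.

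The conceptual content sits entirely in the identity used in part~(1); once that identity is in hand, both statements reduce to inspection of a constant term. The only point requiring care is that the formal manipulations (in particular the existence and differentiation of $h(z)^{-N}$ as a Laurent series) are legitimate in positive characteristic, and this is ensured by the standing assumption $h'(0)\neq 0$ together with the fact that $\K[[z]]$-derivatives extend without issue to $\K((z))$. Thus I do not expect a serious obstacle; the main hazard is arithmetic bookkeeping in the derivation of the identity.
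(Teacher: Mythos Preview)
Your proof is correct and follows essentially the same approach as the paper. For part~(1) the paper writes $h'/h^{N+1}$ itself as the derivative of $-\tfrac{1}{N}h^{-N}$ and then observes that the coefficient of $z^{-d-1}$ in any derivative vanishes when $p\mid d$; your version instead pulls $z^d$ inside via the quotient rule, but the mechanism (derivative has no residue, and the leftover term carries a factor~$d\equiv 0$) is identical. For part~(2) both arguments factor $h(z)=h'(0)\,z\,u(z)$ with $u(0)=1$ and read off the constant term; your write-up is slightly more streamlined than the paper's explicit geometric-series expansion, but the content is the same.
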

\begin{proof} 
To prove the first assertion, we assume that~$N$ is not divisible by~$p$ and note that \begin{equation}\label{derivative}\frac{h'(z)}{h(z)^{N+1}} = \left(-\frac{1}{N}\cdot\frac{1}{h(z)^N}\right)',\end{equation}  which is well-defined in~$\K$. The coefficient of~$\frac{1}{z}$ in the Laurent expansion about 0 of \eqref{zd} corresponds to the coefficient of~$\frac{1}{z^{d+1}}$ in the Laurent expansion about 0 of~\eqref{derivative} % \[\left(-\frac{1}{N}\cdot \frac{1}{h(z)^N}\right)',\] 
which is clearly equal to 0 since~$p\mid d$. This completes the proof of the first assertion of the lemma.

Put~\[\gamma\=h'(0), \quad h(z) \= z\left(\gamma + \sum_{j=1}^{+\infty} a_jz^j\right),\quad \text{ and } \quad b_i\= \frac{a_i}{\gamma}.\]%\quad \text{ and } \quad T_{N,d}(z) \= z^d\cdot\frac{h'(z)}{h(z)^{N+1}}.\]
For the second assertion we note that
\begin{align*}\label{expand}
z^d&\cdot \frac{h'(z)}{h(z)^{d+1}}= z^d \cdot \frac{\gamma(1 + 2b_1z + 3b_2z^2+\cdots)}{(\gamma z)^{d+1}\left(1 + b_1z + b_2z^2+\cdots\right)^{d+1}}.
\end{align*}
Denote by~$T(z)\=(1 + b_1z + b_2z^2+\cdots)^{d+1}-1$. This yields
\begin{align*}
z^d\cdot \frac{h'(z)}{h(z)^{d+1}}&= z^d \cdot \frac{\gamma(1 + 2b_1z + 3b_2z^2+\cdots)}{(\gamma z)^{d+1}\left(1  + T(z)\right)}\\
&= \frac{z^{-1}}{\gamma^d} (1 + 2b_1z  +3b_2z^2+\cdots)\left[1-T(z)+T(z)^2-\cdots\right]. \notag
\end{align*}
Hence, it follows directly that the coefficient of~$z^{-1}$ of \[z^d\cdot \frac{h'(z)}{h(z)^{d+1}},\] is~$\gamma^{-d}$. This proves the second assertion and thus the lemma.
\end{proof}
\begin{proof}[Proof of Proposition \ref{prop:conj2}]
Assume that~$j$ is the smallest integer for which~$\pind_j(f)\neq0$ and put~$\ell_j \= \ell + (j-1)p$, where~$\ell$ is the smallest integer such that~$\ell = q$ in~$\K$. In the case that~$\ell_j = q$ the proof follows from the fact that the residue fixed point index is invariant under change of coordinates. Thus, we may assume that~$\ell_j < q$. 
%If~$q =\ell$ then the proof follows by \cite[Proposition 1]{NordqvistRL2019}. Hence, we assume that~$q > \ell$ 
Further, we also put~$\gamma \= h'(0)$, and recall that~$\pind_j(f)$ is the coefficient of~$\frac{1}{z}$ in the Laurent expansion of~$\frac{z^{q-\ell_j}}{z-f(z)}$ about the origin. Put~$\Delta(z)\= f(z)-z$, and further, putting \[\left(\frac{1}{\Delta}\right)(z) \= \sum_{i=-(q+1)}^{\infty} a_i z^i,\] thus we have \[\frac{z^{q-\ell_j}}{\Delta(z)} = \sum_{i=-(q+1)}^{\infty} a_i z^{q+i-\ell_j} = \sum_{i=q-\ell_j}^q a_{-(i+1)}\frac{1}{z^{\ell_j-q+i+1}} + \sum_{i=-(q-\ell_j)}^\infty a_iz^{q-\ell_j +i}.\] Consequently,~$\pind_j(f)=a_{\ell_j-(q+1)}$, and by assumption~$a_{\ell_i +(q+1)} = 0$ for all positive integers ~$i < j$.

Put~$\widehat{\Delta}(z) \= \widehat{f}(z)-z$. Clearly~$\widehat{f}'(0) = 1$, so~$\ord_z(\widehat{\Delta}(z))\geq 2$.
Moreover, 
\begin{equation}
  \label{DeltaH}
  \begin{split}
    \Delta \circ h(z) & =
    f(h(z)) - h(z)
    \\ & =
    h(\widehat{f}(z)) - h(z) 
    \\ & =
    h(z + \widehat{\Delta}(z)) - h(z)
    \\ & \equiv
    h'(z)\widehat{\Delta}(z) \mod \langle \widehat{\Delta}(z)^2\rangle.    
  \end{split}
\end{equation}
Clearly,~$\ord_z(\Delta) = \ord_z(\widehat{\Delta}) = q + 1$ since the multiplicity is preserved under conjugation by~$h$.
%Since~$\ord_z(\Delta) = q+1$ and~$\ord_z(h') = 0$, we conclude that
%\begin{displaymath}
 % \ord_z(\Delta\circ h) = q+1
 % \text{ and }
 % {\ord_z(h'\cdot \widehat{\Delta})} = \ord_z(\widehat{\Delta}).
%\end{displaymath}
%On the other hand, by (\ref{DeltaH}) we have~$\ord_z(\Delta\circ h-h'\cdot\widehat{\Delta})\geq 2\ord_z(\widehat{\Delta})$ and therefore       
%\begin{displaymath}
%  \ord_z(\widehat{\Delta}) = \ord_z(\Delta \circ h) = q+1.
%\end{displaymath}
Thus, using (\ref{DeltaH}) we obtain
\[\Delta \circ h \equiv h'\cdot \widehat{\Delta} \mod \langle z^{2q+2} \rangle.\]
Hence, we conclude that~$\pind_j(\widehat{f})$ is equal to the coefficient of~$\frac{1}{z}$ in the Laurent series expansion about~$0$ of
\begin{equation}\label{derive}z^{q-\ell_j}\cdot\left(\frac{h'}{\Delta\circ h}\right)(z). \end{equation}

We have
\begin{displaymath}
  z^{q-\ell_j}\cdot\left(\frac{h'}{\Delta\circ h}\right)(z)
=
z^{q-\ell_j}\sum_{N = q-\ell_j}^{q} a_{- (N + 1)} \frac{h'(z)}{h(z)^{N + 1}}
+
z^{q-\ell}\sum_{i = \ell_j-q}^{+\infty} a_i h(z)^i h'(z).
\end{displaymath}
We note that the only integers in the set~$\mathcal{S} \= \{\, q-\ell_j+1,q-\ell_j+2,\dots,q\,\}$ which are divisible by~$p$ are~$q-\ell_1, q-\ell_2, \dots, q-\ell_{j-1}$, and by our assumption we have that the corresponding coefficients~$a_{\ell_i-(q+1)}=0$ for each~$i \in \{1,2,\dots, j-1\}$. Further, by the first assertion of Lemma \ref{lemm:powersarezero} we have that for each~$N$ in~$\mathcal{S}$, which is not equal to~$\ell_i$ for~$i < j$, the coefficient of~$\frac{1}{z}$ of~$z^{q-\ell_j}h'(z)h(z)^{-(N+1)}$  is zero. Consequently, by the second assertion of Lemma \ref{lemm:powersarezero} we have that the coefficient of~$\frac{1}{z}$ of \eqref{derive} is~$\gamma^{\ell_j-q}a_{\ell_j-(q+1)} = \gamma^{\ell_j-q}\pind_1(f)$. This completes the proof of the proposition.
\end{proof}

\subsection{Properties of the second residue fixed point index}
Let $\K$ be a field and $f \in \K[[z]]$ of the form $f(0)=0$ and $f'(0)=1$, then the origin is a parabolic fixed points with multiplier 1. For parabolic fixed points there is a dynamical variant of the residue fixed point index known as the \emph{r\'{e}sidu it\'{e}ratif} or iterative residue introduced by \'{E}calle in the complex setting. The iterative residue is defined by \[\resit(f) \= \frac{\mult(f)}{2} - \ind(f),\] and it behaves nicely under iteration as for a positive integer $n\not\equiv 0$ in $\K$ we have \[\resit(f^n) = \frac{1}{n}\resit(f),\] see for instance \cite[\S 12]{Milnor2006} for the characteristic 0 case, and \cite[Appendix A]{NordqvistRL2019} for the positive characteristic case. In positive characteristic the second residue fixed point index shares this iterative property with the iterative residue as manifested in the following proposition.
\begin{prop}\label{prop:iter}
 Let~$p$ be a prime and~$\K$ a field of characteristic~$p$. Let~$f \in \K[[z]]$ be a wildly ramified power series of finite multiplicity such that~$\mult(f)  \geq p+1$. Then for any positive integer~$n$ not divisible by~$p$, we have \[\pind_1(f^n) = \frac{1}{n}\pind_1(f).\]
\end{prop}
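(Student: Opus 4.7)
The plan is to compare $\Delta(z) \coloneqq f(z) - z$ with $\Delta_n(z) \coloneqq f^n(z) - z$ through sufficiently high order, and then read off the residue directly. The first step is to prove, by induction on $n$, the congruence
\[
\Delta_n \equiv n\Delta \pmod{z^{2q+1}}.
\]
Writing $\Delta_n = \Delta \circ f^{n-1} + \Delta_{n-1}$ and Taylor-expanding
\[
\Delta(z + \Delta_{n-1}(z)) = \Delta(z) + \Delta'(z)\Delta_{n-1}(z) + \tfrac{1}{2}\Delta''(z)\Delta_{n-1}(z)^{2} + \cdots,
\]
every correction term has $z$-adic order at least $q + (q+1) = 2q+1$, since $\ord_z(\Delta') \geq q$ and $\ord_z(\Delta_{n-1}) \geq q+1$ (by the induction hypothesis). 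Because $n$ is invertible in $\K$, this congruence simultaneously confirms that $\mult(f^n) = q+1$, so $\pind_1(f^n)$ is indeed computed from $z^{q-\ell_1}/(z - f^n(z))$ with the same exponent $q - \ell_1$ as for $f$.

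Next, write $\Delta_n = n\Delta\cdot(1 + g)$ with $\ord_z(g) \geq q$ (which is possible thanks to the congruence above), and expand geometrically
\[
\frac{z^{q-\ell_1}}{z - f^n(z)} = \frac{1}{n}\cdot\frac{z^{q-\ell_1}}{z - f(z)} + \sum_{k \geq 1} \frac{z^{q-\ell_1}}{-n\Delta(z)}\,(-g(z))^{k}.
\]
Since $z^{q-\ell_1}/\Delta(z)$ has lowest term in degree $-\ell_1 - 1$, every tail summand has $z$-adic order at least $(-\ell_1 - 1) + kq \geq q - \ell_1 - 1$. The hypothesis $\mult(f) \geq p+1$ gives $q \geq p$, and since $\ell_1 \equiv q \pmod{p}$ with $0 \leq \ell_1 \leq p-1$ (or $\ell_1 = 0$ when $p \mid q$), in all cases $\ell_1 \leq q - 1$, so $q - \ell_1 - 1 \geq 0$. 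The tail therefore contributes nothing to the coefficient of $z^{-1}$, and reading off that coefficient on both sides yields $\pind_1(f^n) = \tfrac{1}{n}\pind_1(f)$.

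The main obstacle is really just careful order-bookkeeping at two points: first, the inductive Taylor-expansion argument giving $\Delta_n \equiv n\Delta$ to order $z^{2q}$; and second, verifying the strict inequality $\ell_1 < q$ that is forced precisely by $\mult(f) \geq p + 1$. This inequality is what guarantees that the perturbation $g$ is invisible to the $z^{-1}$-coefficient; without it (for instance in the small-multiplicity regime), higher-order coefficients of $f$ would enter the residue, and the clean relation $\pind_1(f^n) = \pind_1(f)/n$ would no longer hold in this elementary form. Notably, neither Lemma \ref{lemm:powersarezero} nor the invariance statement Proposition \ref{prop:conj2} seems to be needed for this argument.
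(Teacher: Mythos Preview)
Your argument is correct, and it takes a somewhat different (more direct) route than the paper's own proof. Both proofs share the first step, namely the congruence $\Delta_n \equiv n\Delta \pmod{z^{2q+1}}$; this is exactly the paper's Observation~\ref{obs:1}. After that the two proofs diverge. The paper invokes Lemma~\ref{prop:ind2conj} to conjugate $f$ to the short normal form $z(1+\alpha z^q+\beta z^{q+\ell})$, computes $f^n$ in that form, and reads off $\pind_1$ via Lemma~\ref{prop:alphaa}; this route implicitly uses the conjugation invariance of $\pind_1$ (Proposition~\ref{conj}) to justify passing from $f$ to the normal form and back. You instead stay with the original $f$, write $\Delta_n=n\Delta(1+g)$ with $\ord_z(g)\ge q$, expand geometrically, and observe that each tail term $\frac{z^{q-\ell_1}}{-n\Delta}(-g)^k$ has order $\ge kq-\ell_1-1\ge q-\ell_1-1\ge 0$ since $\ell_1\le p-1<q$. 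This avoids any appeal to the invariance result or to Lemma~\ref{removeterms}, at the price of one extra line of order-bookkeeping. As you note, this is precisely where the hypothesis $\mult(f)\ge p+1$ is used: it forces $\ell_1<q$, which is what makes the tail invisible to the $z^{-1}$ coefficient.

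One cosmetic remark: writing the Taylor expansion with $\tfrac{1}{2}\Delta''(z)$ is a little informal in small characteristic (e.g.\ $p=2$). The claim you actually need, that $\Delta(z+w)-\Delta(z)$ has $z$-order at least $2q+1$ when $\ord_z(w)\ge q+1$, follows directly from the binomial expansion $(z+w)^i-z^i=\sum_{j\ge 1}\binom{i}{j}z^{i-j}w^j$ for $i\ge q+1$, and this works over any ring. You may want to phrase it that way to avoid any ambiguity.
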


The proof of Proposition \ref{prop:iter} is given at the end of this section. The following lemma will be useful.

%\begin{prop}\label{thm:explicit}
%Let~$\K$ be a field of positive characteristic~$p$. Furthermore, let~$q\geq1$ be an integer not divisible by~$p$ and suppose that~$f$ is a power series with coefficients in~$\K$, such that~$\mult(f) = q+1$. Then the second residue fixed point index of~$f$ is given by the following formula
%\begin{equation}
%    \label{eq:closed-formula}\pind_1(f) = - \frac{1}{a_q^{\ell(q+1)/q}}\sum_{\substack{\MultiInd \in \N^{\ell+1} \\ |\MultiInd| = \ell, \| \MultiInd \| = \ell }} (-1)^{\ell-\MultiInd_0}\binom{\ell-\MultiInd_0}{\MultiInd_{1},\ldots,\MultiInd_{\ell}}\prod_{j = 0}^\ell a_{q + j}^{\iota_j}.\end{equation}

%\end{prop}

%The proof of Proposition \ref{thm:explicit} is given in Appendix \ref{app:proof}.

\begin{lemma}[Lemma 1, \cite{NordqvistRL2019}]\label{removeterms}
  Let~$\K$ be a field,~$q \ge 1$ an integer, and~$f$ a power series with coefficients in~$\K$ of the form
  \begin{displaymath}
    f(z)
    =
    z\left(1 + \sum_{j=q}^{+\infty} a_jz^j\right), \text{ with } a_q \neq 0.
  \end{displaymath}
  Then, for every integer~$k \ge 1$ such that~$a_{q + k} \neq 0$ and~$k\neq q$ in~$\K$, there is a polynomial~$h$ such that~$\mult(h) = k + 1$ and
  \[ h \circ f \circ h^{-1}(z) \equiv f(z) - a_{q+k}z^{q+k+1} \mod \langle z^{q+k+2}\rangle.\]
\end{lemma}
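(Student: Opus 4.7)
My plan is to look for $h$ of the simplest possible shape, namely a single-term perturbation of the identity,
$$h(z) = z + c\,z^{k+1}, \qquad c \in \K,$$
and to determine $c$ so that the $z^{q+k+1}$ coefficient of $\widehat{f}\=h\circ f\circ h^{-1}$ becomes zero. Such an $h$ is automatically a polynomial, satisfies $h(0)=0$, $h'(0)=1\neq 0$ (so is invertible as a power series), and has $\mult(h)=k+1$ as soon as $c\neq 0$.

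Rather than compute $h^{-1}$ explicitly, I would rewrite the conjugation as $\widehat{f}\circ h = h\circ f$ and work modulo $\langle z^{q+k+2}\rangle$. Set $\widehat{f}(z)\=f(z)+\delta(z)$. On the right-hand side, using $f(z)\equiv z + a_q z^{q+1}\pmod{z^{q+2}}$ and expanding $f(z)^{k+1}$,
$$h(f(z)) = f(z) + c\,f(z)^{k+1} \equiv f(z) + c\,z^{k+1} + c(k+1)a_q\,z^{q+k+1} \pmod{z^{q+k+2}}.$$
On the left-hand side, I would Taylor-expand
$$f(z+cz^{k+1}) = f(z) + f'(z)\cdot cz^{k+1} + \tfrac{f''(z)}{2}(cz^{k+1})^2 + \cdots\,.$$
Since $f'(z)\equiv 1 + (q+1)a_q z^q \pmod{z^{q+1}}$ and $f''(z)$ has order $\ge q-1$, the third Taylor term has order $\ge q-1+2(k+1) = q+2k+1 \ge q+k+2$ whenever $k\ge 1$; all later Taylor terms are of even higher order. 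Hence
$$f(h(z)) \equiv f(z) + c\,z^{k+1} + c(q+1)a_q\,z^{q+k+1} \pmod{z^{q+k+2}}.$$
A similar order estimate shows $\delta(h(z))\equiv\delta(z)\pmod{z^{q+k+2}}$, because if we impose $\ord_z\delta\ge q+k+1$, then substituting $z\mapsto z+cz^{k+1}$ only alters terms of order $\ge q+2k+2$.

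Equating the two sides of $\widehat{f}\circ h = h\circ f$ modulo $\langle z^{q+k+2}\rangle$ yields
$$\delta(z)\equiv c(k-q)\,a_q\,z^{q+k+1}\pmod{z^{q+k+2}}.$$
To obtain $\widehat{f}(z)\equiv f(z) - a_{q+k}z^{q+k+1}\pmod{z^{q+k+2}}$ we require
$$c(k-q)\,a_q = -a_{q+k}, \qquad \text{i.e.,}\qquad c \;=\; \frac{a_{q+k}}{(q-k)\,a_q}.$$
The three hypotheses $a_q\neq 0$, $a_{q+k}\neq 0$, and $k\neq q$ in $\K$ make this $c$ well-defined and nonzero, so $h$ is a genuine polynomial of multiplicity $k+1$ and the conjugation has the stated effect.

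The only nontrivial point is the bookkeeping that confirms the vanishing modulo $z^{q+k+2}$ of (i) all Taylor terms past the linear one in $f(z+cz^{k+1})$ and (ii) the difference $\delta(h(z))-\delta(z)$; once those order estimates are in place, matching the single coefficient at $z^{q+k+1}$ pins down $c$ uniquely, and the divisibility condition $k\neq q$ in $\K$ is exactly what is needed to solve for it.
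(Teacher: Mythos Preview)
The paper does not supply its own proof of this lemma; it is quoted verbatim from \cite{NordqvistRL2019} and used as a black box. So there is nothing to compare against, and your argument stands on its own.

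Your approach is the standard one and is essentially correct. Two small points of presentation are worth tightening. First, writing the Taylor expansion as $f(z+w)=f(z)+f'(z)w+\tfrac{f''(z)}{2}w^2+\cdots$ is awkward over a field of arbitrary characteristic (in characteristic~$2$ the factor $\tfrac{1}{2}$ is illegal); the clean fix is to use the Hasse derivatives, or simply to expand $\Delta(h(z))=\sum_{j\ge q}a_j z^{j+1}(1+cz^{k})^{j+1}$ directly and read off the orders, which gives the same estimate. Second, the sentence ``if we impose $\ord_z\delta\ge q+k+1$'' sounds like you are assuming what you want; the logic should run the other way. From $\delta(h(z))=h(f(z))-f(h(z))$ and your explicit computation you already know $\ord_z(\delta\circ h)\ge q+k+1$, and since $h$ is an invertible power series with $h(0)=0$ this forces $\ord_z(\delta)\ge q+k+1$. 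Only then can you legitimately conclude $\delta(h(z))\equiv\delta(z)\pmod{z^{q+k+2}}$ and match the single coefficient. With those two clarifications in place the proof is complete.
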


\begin{lemma}\label{prop:alphaa}
Let~$\K$ be a field of positive characteristic~$p$. Furthermore, let~$q\geq1$ be an integer not divisible by~$p$, and denote by~$j$ the smallest integer such that~$\pind_j(f)\neq 0$. Furthermore, suppose that~$f$ is a power series with coefficients in~$\K$, of the form \[f(z) \equiv z\left(1 + \alpha z^q + \beta z^{q+\ell_j}\right)\mod \langle z^{q+\ell_j+2} \rangle, \quad \alpha\neq 0.\] Then~$\pind_j(f) = \frac{\beta}{\alpha^2}$.
\end{lemma}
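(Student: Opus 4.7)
The plan is a direct computation of the $z^{-1}$-coefficient in the Laurent expansion defining $\pind_j(f)$. From the given congruence, the coefficients of $z^{q+2},\ldots,z^{q+\ell_j}$ in $f$ vanish, so I can factor $z - f(z)$ as
\[
z - f(z) = -\alpha z^{q+1}\left(1 + \frac{\beta}{\alpha}z^{\ell_j} + s(z)\right),
\qquad s(z)\in\langle z^{\ell_j+1}\rangle.
\]

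Next I would invert the parenthesised factor using the $z$-adically convergent expansion $\tfrac{1}{1+u} = 1 - u + u^2 - \cdots$. Because $\ell_j\geq 1$, the contribution of $u^k$ for $k\geq 2$ sits at orders $\geq 2\ell_j\geq \ell_j+1$, so
\[
\frac{1}{z-f(z)} \;=\; -\frac{1}{\alpha z^{q+1}} + \frac{\beta}{\alpha^2 z^{q+1-\ell_j}} + O\!\left(z^{\ell_j-q}\right).
\]
Multiplying through by $z^{q-\ell_j}$, which is how $\pind_j(f)$ is computed, gives
\[
\frac{z^{q-\ell_j}}{z-f(z)} \;=\; -\frac{1}{\alpha z^{\ell_j+1}} + \frac{\beta}{\alpha^2}\cdot\frac{1}{z} + O(z^0),
\]
so the coefficient of $1/z$ is exactly $\beta/\alpha^2$, which is the desired identity.

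The only point worth verifying carefully is that the leading term $-\tfrac{1}{\alpha z^{\ell_j+1}}$ does not coincidentally land at $z^{-1}$. Since $p\nmid q$ by hypothesis, we have $q\neq 0$ in $\K$, hence $\ell_1\geq 1$ and therefore $\ell_j\geq 1$, so this term lives at order $\leq -2$ and makes no contribution to the residue. The hypothesis that $j$ is the smallest index with $\pind_j(f)\ne 0$ is not used in the computation itself — it is automatically consistent with the stated normal form, since for $i<j$ the same calculation applied to $z^{q-\ell_i}/(z-f(z))$ produces only terms at orders $\leq -2$ or $\geq 0$, confirming $\pind_i(f)=0$. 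There is no serious obstacle; the proof is a short, deterministic calculation, and the only pitfall to avoid is a sign slip in extracting the factor of $-\alpha z^{q+1}$.
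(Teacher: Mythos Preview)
Your proof is correct and follows essentially the same approach as the paper: factor out $-\alpha z^{q+1}$ from $z-f(z)$, invert the remaining unit via a geometric series, multiply by $z^{q-\ell_j}$, and read off the coefficient of $z^{-1}$. You add more care (checking $\ell_j\ge 1$ and noting that the minimality hypothesis on $j$ is not actually used in the computation), but the argument is the same direct calculation.
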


\begin{proof}
Recall that~$\pind_j(f)$ is the coefficient of~$\frac{1}{z}$ in the Laurent expansion of \begin{equation}\label{zql}\frac{z^{q-\ell_j}}{z-f(z)}\end{equation} about the origin. Expanding \eqref{zql} about 0 we obtain \[\frac{z^{q-\ell_j}}{z-f(z)} = -\frac{z^{q-\ell_j}}{\alpha z^{q+1}}\frac{1}{1+\frac{\beta}{\alpha} z^{\ell_j} +\cdots } = -\frac{1}{\alpha}z^{-\ell_j-1}\left(1 -\frac{\beta}{\alpha} z^{\ell_j}-\cdots\right).\] Clearly, the coefficient of~$\frac{1}{z}$ in \eqref{zql} is~$\frac{\beta}{\alpha^2}$, which proves the proposition.
\end{proof}

\begin{lemma}\label{prop:ind2conj}
Let~$p$ be a prime number and~$\K$ a field of characteristic~$p$. Moreover, let~$f$ be a power series with coefficients in~$\K$ such that \[f(z) = z(1 + \alpha z^q + \cdots), \quad \alpha\neq0,\] and let~$j$ be the smallest integer such that~$\pind_j(f)\neq0$. Then,~$f$ is conjugated to a power series with coefficients
in~$\K$, of the form
\[z\mapsto z\left(1 + \alpha z^q + \beta z^{q+\ell_j}\right) \mod \langle z^{q+\ell_{j}+p+1} \rangle,\] such that~$\pind_j(f) = \beta/\alpha^2$.
\end{lemma}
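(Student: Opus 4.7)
The plan is to use Lemma \ref{removeterms} iteratively to bring $f$ into a pre-normal form, and then to invoke the invariance of the lowest nonvanishing $\pind_i$ to force the unwanted intermediate terms to vanish. First, starting from $f$, I would iteratively apply Lemma \ref{removeterms} to kill the coefficient of $z^{q+k+1}$ for each $k \in \{1, 2, \dots, \ell_j+p-1\}$ with $k \not\equiv q \pmod{p}$, processing the values of $k$ in increasing order. At every step the conjugating polynomial has multiplicity at least $2$, hence derivative $1$ at the origin, and it leaves coefficients of order strictly below $z^{q+k+1}$ unchanged. Writing $H$ for the composition of these polynomial changes of coordinates, this produces $g \= H \circ f \circ H^{-1}$ with $H'(0)=1$ and
\[
g(z) \equiv z\Bigl(1 + \alpha z^q + \sum_{i=1}^{j} c_i z^{q+\ell_i}\Bigr) \mod \langle z^{q+\ell_j+p+1}\rangle,
\]
because the only exponents $k \in [1, \ell_j + p - 1]$ congruent to $q$ modulo $p$ are precisely $\ell_1, \ell_2, \dots, \ell_j$ (the next one, $\ell_{j+1} = \ell_j + p$, lies just outside the range).

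Next, I would show that the intermediate coefficients $c_1,\dots,c_{j-1}$ all vanish. Applying Proposition \ref{prop:conj2} to both $H$ and $H^{-1}$ (using the symmetry $f = H^{-1} \circ g \circ H$) shows that the smallest index $i$ with $\pind_i(g) \neq 0$ is again $j$, so in particular $\pind_i(g) = 0$ for every $i < j$. Suppose for contradiction that some $c_{i_0}$ with $i_0 < j$ is nonzero, and take the smallest such $i_0$. Then the formal Laurent expansion of $\tfrac{z^{q-\ell_{i_0}}}{z - g(z)}$ about $0$, computed by factoring out $-\alpha z^{q+1}$ and geometrically inverting $1 + (c_{i_0}/\alpha)z^{\ell_{i_0}} + \cdots$ exactly as in the proof of Lemma \ref{prop:alphaa}, contributes $c_{i_0}/\alpha^2$ as the coefficient of $z^{-1}$. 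This yields $\pind_{i_0}(g) = c_{i_0}/\alpha^2 \neq 0$, contradicting $i_0 < j$.

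Finally, with $c_1 = \cdots = c_{j-1} = 0$ and $\beta \= c_j$, the series $g$ is in the required normal form, and Lemma \ref{prop:alphaa} applied to $g$ (whose hypothesis is now satisfied) gives $\pind_j(g) = \beta/\alpha^2$; since $H'(0)=1$, Proposition \ref{prop:conj2} then yields $\pind_j(f) = \pind_j(g) = \beta/\alpha^2$, as required. The main obstacle is the middle step: one must avoid circular reasoning around Lemma \ref{prop:alphaa}, which is stated only for the smallest nonvanishing index. The way around this is to redo its short Laurent computation directly at the hypothetical index $i_0$, where only the leading behavior $\alpha z^{q+1}$ and the first possibly nonzero intermediate term $c_{i_0} z^{q+\ell_{i_0}+1}$ contribute to the coefficient of $z^{-1}$, and thereby derive the contradiction without invoking the lemma's formal hypothesis.
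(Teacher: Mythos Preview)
Your proof is correct and follows essentially the same route as the paper's: kill the non-resonant terms via Lemma~\ref{removeterms}, then use invariance together with the Laurent computation of Lemma~\ref{prop:alphaa} to force the intermediate coefficients $c_i$ to vanish. You are in fact more careful than the paper in explicitly invoking Proposition~\ref{prop:conj2} (in both directions) to transfer the vanishing of $\pind_i$ to the conjugate $g$, and in redoing the short Laurent computation at the hypothetical $i_0$ to sidestep the formal hypothesis of Lemma~\ref{prop:alphaa}---both points the paper glosses over.
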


\begin{proof}
%  Note that the power series~$\widehat{f}(z) \= \gamma^{-1} f(\gamma z)$ satisfies~$\mult(\widehat{f}) = q + 1$ and that the coefficient of~$z^{q + 1}$ in~$\widehat{f}$ is equal to~$1$.
%Denote by~$\alpha$ the coefficient of~$z^{q+1}$ in~$f$ is~$\alpha$. 
%First, we note that if~$q > p$ then~$2q>q+2\ell$ since~$q-\ell\geq p$ and~$\ell<p$, so that~$2q+1$ is not in the set~$\{q+2,\cdots, q+2\ell\}$. Consequently, 
We utilize Lemma \ref{removeterms} repeatedly  to eliminate each term of  degrees in the set \[\mathcal{S} = \{q+2, q+3,\dots, q+\ell_{j+1}\}\setminus \{q + \ell_1+1, q + \ell_2+1,\dots, q+\ell_j+1\}.\] Then there are elements~$\beta_1,\dots, \beta_{j}$ in~$\K$ such that~$f$ is conjugated to a power series~$g$ of the form \[g(z) \equiv  z\left(1 + \alpha z^q + \beta_1z^{q+\ell_1} + \beta_2z^{q+\ell_2}+\dots + \beta_{j}z^{q+\ell_{j}}\right) \mod \langle z^{q + \ell_{j}+p+1} \rangle.\] By Lemma \ref{prop:ind2conj}~$\pind_1(g) = \beta_1/\alpha^2$, which is zero by assumption implying~$\beta_1=0$, and by inductively applying Lemma \ref{prop:ind2conj} we obtain~$\beta_1 = \dots = \beta_{j-1} = 0$.
%\[\mathcal{S}\= \left\{q+2,\dots, q+\ell,q+\ell+2,\dots, q+\ell_{j+1}\right\},\] of~$f$, such that the degree is not congruent to~$q$ modulo~$p$. 
Further, denote by~$h_k$ the coordinate which eliminate the coefficient of degree~$k$ of~$f$. Then, by Lemma \ref{removeterms}, for all~$k\in \mathcal{S}$ we have~$h_k'(0)=1$. Put \[h \= h_{q+\ell_{j+1}} \circ\dots\circ h_{q+2},\] and consequently~$h'(0)=1$. Thus, there exists an element~$\beta \in \K$ such that  \[h \circ f \circ h^{-1}(z) \equiv z\left(1 + \alpha z^q + \beta z^{q+\ell_j}\right) \mod \langle z^{q+\ell_j+p+1} \rangle.\] The final claim that~$\pind_j(f) = \beta/\alpha^2$ follows by Lemma \ref{prop:alphaa}. This completes the proof of the lemma.
\end{proof}

%Lemma \ref{removeterms} allows for us to kill all the intermediate terms in between~$z^{q+1}$ and~$z^{q+\ell+1}$ in \[f(z) \equiv z\left(1 + \sum_{i=q}a_i z^i\right),\] by conjugation. Furthermore, seeing that each~$h$ have multiplier 1, we have that~$f$ is conjugated to \[z\mapsto z\left(1 + \alpha z^q + \beta z^{q+\ell}\right) \mod \langle z^{q+\ell+2}\rangle.\] Hence, expanding~$\frac{z^{q-\ell}}{z-f(z)}$ about the origin we obtain \begin{equation}\label{zql2}\frac{z^{q-\ell}}{z-f(z)} = -\frac{z^{q-\ell}}{\alpha z^{q+1}}\frac{1}{1+\frac{\beta}{\alpha} z^\ell +\cdots } = -\frac{1}{\alpha}z^{-\ell-1}\left(1 -\frac{\beta}{\alpha} z^\ell-\cdots\right).\end{equation} Clearly, the coefficient of~$\frac{1}{z}$ in \eqref{zql2} is~$\frac{\beta}{\alpha^2}$, and thus~$\pind_1(f) = \frac{\beta}{\alpha^2}$.

Before we give the proof of Proposition \ref{prop:iter} we make the following elementary observation.
\begin{observation}\label{obs:1}
For any power series~$\psi$ having the origin as a fixed point of multiplicity~$q+1$ we have for every integer~$n\geq0$,
\begin{align*}
\psi^n(z)-z \equiv n (\psi(z) -z)  \mod \langle z^{2q+1} \rangle.
\end{align*} 
\end{observation}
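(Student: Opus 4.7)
The plan is to prove the observation by a straightforward induction on~$n$. Writing $\Delta(z) \= \psi(z) - z$, by assumption $\ord_z(\Delta) = q + 1$, and the claim reads $\psi^n(z) - z \equiv n\,\Delta(z) \mod \langle z^{2q+1}\rangle$. The base cases $n = 0, 1$ are immediate.

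For the inductive step, assume the congruence holds for $n$ and decompose
\[
\psi^{n+1}(z) - z = \bigl(\psi^n(z) - z\bigr) + \Delta\bigl(\psi^n(z)\bigr).
\]
The first summand equals $n\Delta(z)$ modulo $\langle z^{2q+1}\rangle$ by the induction hypothesis, so it suffices to establish $\Delta(\psi^n(z)) \equiv \Delta(z) \mod \langle z^{2q+1}\rangle$, after which the right-hand side collapses to $(n+1)\Delta(z)$ as required.

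This last congruence is the only substantive point, and it reduces to a formal order estimate. Writing $\psi^n(z) = z + w(z)$ with $\ord_z(w) \geq q+1$ and $\Delta(z) = \sum_{j \geq q+1} a_j z^j$, I would expand
\[
\Delta(z+w) - \Delta(z) = \sum_{k \geq 1} w(z)^k \sum_{j \geq q+1} a_j \binom{j}{k} z^{j-k};
\]
for every $k \geq 1$ the monomial $z^{j-k} w(z)^k$ has $z$-adic order at least $(q+1-k) + k(q+1) = (q+1) + kq \geq 2q+1$, so every term lies in $\langle z^{2q+1}\rangle$. I expect no characteristic-specific subtlety to arise: the entire argument is bookkeeping of valuations, and the key mechanism is simply that $\Delta'$ has order at least $q$, while the perturbation $w$ has order at least $q+1$, so a single-derivative Taylor term already sits in the required ideal.
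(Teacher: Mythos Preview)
Your proof is correct. The paper does not supply a proof of this observation at all---it is stated as ``elementary'' and left to the reader---so there is nothing to compare against; your induction with the order estimate $\ord_z\bigl(\Delta(z+w)-\Delta(z)\bigr)\ge (q+1)+kq\ge 2q+1$ for each $k\ge 1$ is exactly the natural argument one would expect.
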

\begin{proof}[Proof of Proposition \ref{prop:iter}]

Let~$\alpha \in \K^\times$,~$\beta \in \K$, and put~$\beta/\alpha^2\=\pind_1(f)$,~$q\=\mult(f)-1$, and denote by~$\ell$ the smallest nonnegative integer such that~$\ell = q$ in~$\K$. We may assume by Lemma \ref{prop:ind2conj} that~$f$ is of the form \[f(z) \equiv z(1 + \alpha z^q + \beta z^{q+\ell}) \mod \langle z^{q+\ell+2} \rangle.\]

Hence, we have \begin{equation}\label{fn}f^n(z) \equiv z(1+ n\alpha z^{q} + n\beta z^{q+\ell}) \mod \langle z^{q+\ell+2}\rangle,\end{equation} Thus, by Lemma \ref{prop:alphaa} and \eqref{fn} we have \[\pind_1(f^n) = \frac{n\beta}{(n\alpha)^2} = \frac{1}{n}\pind_1(f).\] This completes the proof of the proposition.
\end{proof}

%\begin{proof}

%\end{proof}
\section{Lower ramification numbers for power series with large multiplicity}\label{sec:lowerramif}
In this section we first prove the lower bound for the sequence of lower ramification numbers for power series in which the origin as a fixed point have multiplicity which is larger than the ground characteristic of the field. In the second part, we give a proof of Theorem \ref{thm:criterion} and in the last part we prove Theorem \ref{thm:criterion2} and Corollary \ref{qramiflarge}. %discuss a particular example which was raised as a question in \cite[Question 2]{NordqvistRL2019}.

\subsection{Lower bound of lower ramification numbers}\label{sec:deltaqraminf}
This section is devoted to prove the following proposition. For completion, we acknowledge that it is possible to deduce the proposition using \cite[Theorem 6]{Camina2000}. %Nonetheless, before proceeding we make the following definition, used in~\cite[\emph{Exemple}~3.19]{RiveraLetelier2003} and~\cite{LindahlRiveraLetelier2013}, that will be used throughout this section and the next. Given a wildly ramified power series~$f$~put~$\Delta_0 \= \text{id}$, and for any integer~$m\ge1$ we define recursively \[\Delta_m(z) = \Delta_{m-1}(f(z)) - \Delta_{m-1}(z).\] Given a prime~$p$ we will make use several times of the fact that \[\Delta_p(z) = f(z)-z \mod \langle p \rangle.\]
\begin{prop}\label{prop:qramifisminramif}
  Let~$\K$ be a field of characteristic~$p$. Further, suppose that~$q\geq1$ is an integer and denote by~$\ell$ the smallest nonnegative integer such that~$q=\ell$ in~$\K$. Then for every power series~$f$ in~$\K[[ z]]$ satisfying~$\mult(f) = q+1$, we have for every integer~$n \ge 1$
  \begin{equation}
    \label{ineq}
    i_n(f) \geq \ell(1 + p + \cdots + p^{n-1}) + qp^n.
  \end{equation}
\end{prop}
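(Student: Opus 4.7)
The plan is to argue by induction on $n$, with the substantive content lying in the base case $n = 1$.

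For the base case, I want to show $i_1(f) = \mult(f^p) - 1 \ge \ell + qp$. I would exploit the telescoping identity
$$f^p(z) - z = \sum_{k=0}^{p-1} \Delta(f^k(z)), \qquad \Delta(z) := f(z) - z,$$
together with an iterative refinement of Observation \ref{obs:1}: up to any prescribed power of $z$, each iterate $f^k(z) - z$ can be written as a polynomial in the integer $k$ whose coefficients are polynomial expressions in $\Delta$ and its derivatives, with leading term $k\Delta(z)$, next term of shape $\binom{k}{2}\Delta'(z)\Delta(z)$, and so on. Substituting into the Taylor expansion of $\Delta$ around $z$ and summing over $k$, every coefficient in $f^p(z) - z$ becomes a $\K$-linear combination of the elementary sums $\sum_{k=0}^{p-1} k^j$. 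A standard input is that such sums vanish in $\K$ for $0 \le j \le p - 2$ and equal $-1$ for $j = p - 1$. With careful order-tracking, every monomial of degree strictly less than $\ell + qp + 1$ only meets sums in the vanishing range, so the lowest possibly surviving term of $f^p(z) - z$ sits at order $\ge \ell + qp + 1$, which is the base case.

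For the inductive step, assume the bound at stage $n$ and use $i_{n+1}(f) = i_1(f^{p^n})$. To apply the base case to $f^{p^n}$ I need the associated $\ell$-invariant (the smallest nonnegative integer congruent to $i_n(f)$ in $\K$) to still equal $\ell$. By Sen's theorem \cite{Sen1969}, $i_n(f) \equiv i_0(f) = q \pmod{p^n}$, hence in particular $\equiv \ell \pmod p$, so this invariant is indeed $\ell$. Therefore
$$i_{n+1}(f) \ge \ell + p \cdot i_n(f) \ge \ell + p\bigl[\ell(1 + p + \cdots + p^{n-1}) + qp^n\bigr] = \ell(1 + p + \cdots + p^n) + qp^{n+1},$$
closing the induction.

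The main obstacle is the combinatorial bookkeeping in the base case: verifying that every monomial in $f^p - z$ of degree $\le \ell + qp$ pairs only with sums $\sum_k k^j$ whose exponent lies in the vanishing range $\{0, 1, \ldots, p - 2\}$. For each such monomial one must examine, from the iterative expansion of $f^k - z$, exactly which nested products of $\Delta$ and its derivatives contribute, and bound the resulting $k$-degree against $p - 1$. The quantity $\ell$ then enters precisely as the smallest exponent at which a $k^{p-1}$-type contribution (carrying the nonvanishing sum $\sum_k k^{p-1} = -1$) can first produce a term of order $\ell + qp + 1$, matching the desired bound.
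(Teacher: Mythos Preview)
Your inductive architecture is sound and matches the paper's: both proceed by induction, applying the $n=1$ estimate to $g=f^{p^n}$, and both invoke Sen's theorem to pin down the residue class of $i_n(f)$ modulo~$p$. The difference lies in how the base case is handled.

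The paper packages the base case via the iterated finite-difference operator $\Delta_m$ (with $\Delta_0=\mathrm{id}$, $\Delta_m=\Delta_{m-1}\circ f - \Delta_{m-1}$), using that $f^p-z=\Delta_p$ in characteristic~$p$ together with the one-line estimate $\ord_z(\Delta_{m})-\ord_z(\Delta_{m-1})\ge q$ (Lemma~\ref{lemma:delta}) to get $i_1(f)\ge qp$, and \emph{then} calls Sen's theorem to upgrade this to $i_1(f)\ge qp+\ell$. Your power-sum approach is really the same object in different clothing: since $f^k-z=\sum_{m\ge1}\binom{k}{m}\Delta_m$, your telescoped sum $\sum_{k=0}^{p-1}\Delta(f^k)$ rewrites (via Pascal and hockey-stick) as $\sum_m\binom{p}{m}\Delta_m$, and in characteristic~$p$ only $\Delta_p$ survives. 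In particular, the ``$k^{p-1}$-type contribution'' you isolate \emph{is} $-\tfrac{1}{(p-1)!}\Delta_p=\Delta_p$ (Wilson), so your order-tracking yields exactly $\ord_z(\Delta_p)\ge qp+1$, i.e.\ the same bound as Lemma~\ref{lemma:delta}.

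The genuine gap is your claim that the bookkeeping alone forces the surviving term to sit at order $\ge qp+\ell+1$. That assertion is equivalent to $\ord_z(\Delta_p)\ge qp+\ell+1$, which is precisely the refined base case; nothing in the power-sum vanishing (which only kills $j\le p-2$) supplies the extra~$\ell$. Establishing it directly amounts to reproving the $n=1$ case of Sen's theorem inside your combinatorics, and your sketch does not do this. The easy fix is to do what the paper does: accept $i_1\ge qp$ from the order estimate and then apply Sen (which you already invoke for the inductive step) to obtain $i_1\equiv\ell\pmod p$, hence $i_1\ge qp+\ell$. With that adjustment your argument coincides with the paper's.
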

In the proof of Proposition \ref{prop:qramifisminramif} we make several times use of \cite[Theorem 1]{Sen1969} referred to as Sen's theorem, which states that if~$f$ is a wildly ramified power series, and~$i_m(f)$ is finite for every integer~$m\geq0$, then for every integer ~$n\geq1$ we have \[i_n(f) \equiv i_{n-1}(f) \pmod{p^n}.\] 
Further in this and proceeding sections we make several times use of the `$\Delta$'-operators for a wildly ramified power series~$f$, introduced in \cite[\S3.2]{RiveraLetelier2003}, and \cite{LindahlRiveraLetelier2013}, of defining recursively for every integer~$m\geq 0$ the power series~$\Delta_m$, by~$\Delta_0( z)\=z$ and for~$m \ge 1$, by
\begin{equation}\label{eq:Delta}\Delta_m( z) \= \Delta_{m-1}(\widehat{f}( z)) - \Delta_{m-1}( z).\end{equation}  In particular we make several times use of the fact that for a prime~$p$ we have \begin{equation}\label{eq:7}\Delta_p(z) = f^p(z)-z \mod \langle p \rangle.\end{equation}
The proof of Proposition \ref{prop:qramifisminramif} follows after the following lemma.
\begin{lemma}\label{lemma:delta}
Let~$\K$ be a field of positive characteristic~$p$, and~$\psi(z) \in z\left(1+z\K[[z]]\right)$, and define by~$\Delta_0(z) \= z$, and for integers~$m\geq1$ put \[\Delta_m(z) = \Delta_{m-1}(\psi(z)) - \Delta_{m-1}(z).\] Then \[\ord_z(\Delta_m)-\ord_z(\Delta_{m-1}) \geq \ord_z(\Delta_1)-1.\]
\end{lemma}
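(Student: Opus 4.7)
\medskip

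\noindent\textbf{Proof plan.} Let $q \= \ord_z(\Delta_1) - 1 = \mult(\psi) - 1$, and write $\psi(z) = z + h(z)$, where $h(z) \= \psi(z) - z$ satisfies $\ord_z(h) = q+1$. The plan is to reduce the lemma to the following single claim about an arbitrary power series: for every $g \in \K[[z]]$,
\begin{equation}\label{eq:plan-key}
\ord_z\bigl(g(\psi(z)) - g(z)\bigr) \geq \ord_z(g) + q.
\end{equation}
Granting \eqref{eq:plan-key}, we take $g \= \Delta_{m-1}$, so that $g(\psi(z)) - g(z) = \Delta_m(z)$ by the recursive definition, and conclude
\begin{displaymath}
\ord_z(\Delta_m) \;\geq\; \ord_z(\Delta_{m-1}) + q \;=\; \ord_z(\Delta_{m-1}) + \ord_z(\Delta_1) - 1,
\end{displaymath}
which is exactly the inequality asserted by the lemma.

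To establish \eqref{eq:plan-key}, I would write $g(z) = \sum_{k \geq k_0} b_k z^k$ with $k_0 \= \ord_z(g)$, and expand termwise:
\begin{displaymath}
g(\psi(z)) - g(z) \;=\; \sum_{k \geq k_0} b_k \bigl( (z + h(z))^k - z^k\bigr) \;=\; \sum_{k \geq k_0} b_k \sum_{j=1}^{k} \binom{k}{j} z^{k-j} h(z)^j.
\end{displaymath}
Each inner term has $z$-adic order at least $(k-j) + j(q+1) = k + jq \geq k + q$, since $j \geq 1$ and $q \geq 0$. Summing over $k \geq k_0$ gives $\ord_z(g(\psi) - g) \geq k_0 + q$, which is \eqref{eq:plan-key}.

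There is no real obstacle here; the argument is purely formal and uses neither the characteristic of $\K$ nor any hypothesis on divisibility by $p$. The possible vanishing in $\K$ of binomial coefficients only increases the orders of certain terms, which is harmless for a lower bound. The only point to be careful about is the boundary case $m = 1$, where $g(z) = z$ and $\ord_z(g) = 1$; then \eqref{eq:plan-key} yields $\ord_z(\Delta_1) \geq 1 + q$, consistent with the definition of $q$, so the base case is automatic.
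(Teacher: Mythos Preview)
Your proof is correct and takes essentially the same approach as the paper's: both expand $\Delta_{m-1}(\psi) - \Delta_{m-1}$ termwise in the coefficients of $\Delta_{m-1}$ and bound the $z$-adic order of each summand from below by $\ord_z(\Delta_{m-1}) + q$. The only cosmetic difference is that the paper writes $\psi(z) = z(1 + b_q z^q + \cdots)$ and examines $z^i\bigl[(1+b_q z^q+\cdots)^i - 1\bigr]$, whereas you write $\psi = z + h$ and use the binomial expansion of $(z+h)^k - z^k$; these are equivalent formulations of the same estimate.
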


\begin{proof}
Put~$q\=\ord_z(\Delta_1)-1$, and let~$\psi(z) = z\left(1 + \sum_{i\geq q}b_iz^i\right)$. Further assume that~$\Delta_m(z)=\sum_{i\geq r}a_iz^i$. Then \[\Delta_{m+1}(z) = \sum_{i\geq r} a_iz^i\left[(1+b_qz^q+\cdots)^i - 1\right].\] Clearly, if~$p\nmid r$ we have~$\ord_z(\Delta_{m+1}) = r+q$, and if~$p\mid r$ we obtain~$\ord_z(\Delta_{m+1}) > r+q$ hence \begin{equation*}\label{eq:deltis}\ord_z(\Delta_{m+1})-\ord_z(\Delta_m) \geq q,\end{equation*} which proves the assertion.
\end{proof}

\begin{proof}[Proof of Proposition \ref{prop:qramifisminramif}]
%We use the strategy of~$\Delta$-operators introduced in~\cite[\S3.2]{RiveraLetelier2003} and~\cite{LindahlRiveraLetelier2013}, of defining recursively for every integer~$m\geq 0$ the power series~$\Delta_m$, by~$\Delta_0( z)\=z$ and for~$m \ge 1$, by
%\[\Delta_m( z) \= \Delta_{m-1}(\widehat{f}( z)) - \Delta_{m-1}( z).\] In the proof we make several times use of the fact~$\Delta_p(z) = f^p(z)-z$, 
We utilize the~$\Delta$-operator, and note that~$\Delta_p(z) = f^p(z)-z$ implies that~$i_1(f) = \ord_z(\Delta_p)-1$. Using Lemma \ref{lemma:delta} together with~$\ord_z(\Delta_1) = q+1$, we obtain~$\ord_z(\Delta_p) \geq qp+1$. However, by Sen's theorem we have~$\ord_z(\Delta_p)\equiv \ord_z(\Delta_1) \equiv \ell+1 \pmod{p}$. Hence, \begin{equation}\label{firstinduc}i_1(f) \geq qp + \ell.\end{equation}

We proceed by induction, and assume that \eqref{ineq} holds for some integer~$n\geq1$, and put~$g(z) \= f^{p^n}(z)$. Define by~$\wD{1}{z} \=g(z)-z$, and for integers~$m\geq 2$ we put
\[\wD{m}{z}
  =
  \wD{m-1}{g(z)}-\wD{m-1}{z}.\]
Again, we note that~$\wD{p}{z} = g^p(z)-z = f^{p^{n+1}}(z)-z$, and hence~$i_{n+1}(f) = \ord_z(\wD{p}{z})-1$.
By the induction assumption and Lemma \ref{lemma:delta} we have
\[\ord_z(\widehat{\Delta}_m)-\ord_z(\widehat{\Delta}_{m-1})
  \geq
  \ell (1 + p + \cdots + p^{n-1}) + qp^n. \]
Thus, by an induction argument we obtain~$\ord_z(\widehat{\Delta}_p) \geq p\left[\ell  (1 + p + \cdots + p^{n-1})+qp^{n}\right]+1$. Again, by Sen's theorem we must have~$\ord_z(\widehat{\Delta}_p) \equiv \ord_z(\widehat{\Delta}_1) \equiv \ell(p + p^2 + \cdots + p^{n})+1 \pmod{p^{n+1}}$. Hence, we obtain
\begin{align*}i_{n+1}(f) + 1
  =
  \ord_z(\widehat{\Delta}_p)
  &\geq
  \ell p (1 + p + \cdots + p^{n-1}) + qp^{n+1} +\ell + 1\\
  &=
  \ell(1 + p + \cdots + p^{n}) + qp^{n+1} + 1,\end{align*}
which proves the induction step and together with \eqref{firstinduc}, the proof of the proposition.
\end{proof}

\subsection{Proof of Theorem \ref{thm:criterion}}\label{sec:thmA}
This section is devoted to the proof of Theorem \ref{thm:criterion}, which essentially depends on two  results, one which is a special case of \cite[Main Lemma]{NordqvistRL2019}, stated below as Lemma \ref{lemma:main}. The other is the computation of the first significant terms of~$p$-power iterates of wildly ramified power series with large multiplicity, stated as Proposition \ref{prop:pn}. %The proof of the latter result is given at the end of this section.
\begin{lemma}[Main Lemma, \cite{NordqvistLindahl2017}]\label{lemma:main}
  Let~$p$ be an odd prime number, and consider the rings \begin{displaymath}
  \Z_{(p)} \= \left\{\frac{m}{n} \in \Q : m, n \in \Z, p\nmid n \right\},
  \end{displaymath}
  \begin{displaymath}
      F_1 \= \Z_{(p)}[x_0, x_1],
  \text { and }
  F_\infty \= \Z_{(p)}[x_0, x_1,x_2,\ldots].
\end{displaymath}
   Moreover, let~$q \ge p+1$ be an integer that is not divisible by~$p$, and~$\ell \ge 1$ an integer satisfying
  \begin{displaymath}
    \ell \equiv q \pmod{p},
    \text{ and }
    \ell \le p - 1
    \text{ or }
    2\ell + 1 \le q.
  \end{displaymath}
  Then the power series~$\widehat{f}$ in~$F_\infty[[z]]$ defined by
\[\widehat{f}(z) \= z\left(1 + x_0z^{q} + x_1z^{q+\ell} + z^{q+2\ell}\sum_{i=1}^\infty x_{i+1}z^{i}\right), \]
satisfies the following property: There are~$\beta$ and~$\gamma$ in~$F_1$ such that
\begin{align}
  \label{e:main beta}
  \beta
  & \equiv
    - x_0^{p - 1} x_1 \mod p F_1,\\
      \label{e:main gamma}
      \gamma
  & \equiv
      - x_0^{p - 2} x_1^2 \mod p F_1,
      \intertext{ and }
      \widehat{f}^p(z)
  & \equiv
    z \left(1 + \beta z^{qp + \ell} + \gamma z^{qp + 2\ell} \right) \mod \langle p, z^{qp + 2\ell + 2} \rangle.
\end{align}
\end{lemma}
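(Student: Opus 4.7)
The plan is to translate the lemma into a computation of $\Delta_p(z)$ modulo $\langle p, z^{qp + 2\ell + 2}\rangle$, where $\Delta_m$ is the iterated difference operator from \eqref{eq:Delta} associated to $\widehat{f}$. By \eqref{eq:7} one has $\widehat{f}^p(z) - z \equiv \Delta_p(z) \pmod{p}$, so it suffices to pin down the coefficients of $z^{qp + \ell + 1}$ and $z^{qp + 2\ell + 1}$ in $\Delta_p(z)$ modulo $p$, and to show that no other coefficient of degree strictly less than $qp + 2\ell + 2$ survives modulo $p$. Expanding $\widehat{f}$ gives $\Delta_1(z) = x_0 z^{q+1} + x_1 z^{q+\ell+1} + R(z)$ with $\ord_z R \geq q + 2\ell + 2$; in particular there is no $z^{q+2\ell+1}$ term in $\Delta_1$, which is the key reason why the variable $x_2$ never feeds into $\beta$ or $\gamma$.

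The iteration is set up via the Taylor expansion $\Delta_m(z) = \Delta_{m-1}(z + \Delta_1(z)) - \Delta_{m-1}(z) = \sum_{k \geq 1} \tfrac{1}{k!}\, \Delta_{m-1}^{(k)}(z) \Delta_1(z)^k$. I would prove by induction on $m \in \{1, \ldots, p\}$ that there exist $A_m, B_m, C_m \in F_\infty$ with
\begin{equation*}
\Delta_m(z) \equiv A_m z^{mq+1} + B_m z^{mq+\ell+1} + C_m z^{mq+2\ell+1} \pmod{\langle p, z^{mq+2\ell+2}\rangle}.
\end{equation*}
The $k = 1$ term of the Taylor expansion produces the main recursions
\begin{align*}
A_m &= ((m-1)q + 1)\, x_0\, A_{m-1}, \\
B_m &= ((m-1)q + \ell + 1)\, x_0\, B_{m-1} + ((m-1)q + 1)\, x_1\, A_{m-1}, \\
C_m &= ((m-1)q + 2\ell + 1)\, x_0\, C_{m-1} + ((m-1)q + \ell + 1)\, x_1\, B_{m-1},
\end{align*}
while the $k \geq 2$ terms have order at least $(m+1)q + 1$. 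Under the alternative hypothesis $2\ell + 1 \leq q$ these higher-order contributions exceed $z^{mq + 2\ell + 1}$ and can be discarded; under the hypothesis $\ell \leq p - 1$ with $q < 2\ell$ an extra coefficient at degree $mq + q + 1$ is fed by $\tfrac{1}{2}\Delta_{m-1}''(z)\Delta_1(z)^2$, so the induction hypothesis must be augmented with this coefficient and shown to vanish modulo $p$ at $m = p$.

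Solving the first recursion gives $A_m = x_0^m \prod_{k=0}^{m-1}(kq + 1)$. Since $p \nmid q$ the residues $\{kq \bmod p\}_{0 \leq k \leq p-1}$ exhaust $\mathbb{F}_p$, so exactly one factor, corresponding to the unique $k^* \in \{0, \ldots, p-1\}$ with $k^* q + 1 \equiv 0 \pmod p$, is divisible by $p$; hence $A_p \equiv 0 \pmod p$. Unfolding the recursion for $B_m$ expresses $B_p$ as a sum over the position $j$ at which the $x_1$ factor is introduced, each summand being a product of factors $(kq + \ell + 1)$ and $(kq + 1)$. Using $\ell \equiv q \pmod p$ to identify $(kq + \ell + 1) \equiv ((k+1)q + 1) \pmod p$, together with Wilson's theorem $\prod_{k \in \mathbb{F}_p^\times} k \equiv -1 \pmod p$ applied via the bijection $k \mapsto kq + 1 \pmod p$, the sum collapses to $B_p \equiv -x_0^{p-1} x_1 \pmod p$, matching \eqref{e:main beta}. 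The recursion for $C_m$ unfolds analogously into a double sum over the positions of two $x_1$ insertions, and the same Wilson-type collapse yields $C_p \equiv -x_0^{p-2} x_1^2 \pmod p$, matching \eqref{e:main gamma}.

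The principal obstacle is verifying the Wilson-type cancellations that collapse the sums defining $B_p$ and $C_p$ to precisely $-x_0^{p-1}x_1$ and $-x_0^{p-2}x_1^2$ rather than some other nonzero constant: one must carefully identify the distinguished index $k^*$ and track how each perturbative contribution reassembles at every level of the iteration. A secondary obstacle is the case $\ell \leq p - 1$ with $q < 2\ell$, where the second-order Taylor correction opens the extra coefficient at $z^{mq + q + 1}$; an analogous but independent Wilson-type argument is required to show that this coefficient vanishes modulo $p$ at $m = p$, and this is the real role of the dichotomy $\ell \leq p-1$ or $2\ell + 1 \leq q$ in the hypothesis.
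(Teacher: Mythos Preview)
The paper does not supply its own proof of this lemma: it is quoted as the Main Lemma of \cite{NordqvistLindahl2017} (a special case of \cite[Main Lemma]{NordqvistRL2019}) and used as a black box in the proof of Proposition~\ref{prop:pn}. So there is no in-paper argument to compare against directly.

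That said, your strategy is precisely the method the paper deploys for the sibling result Proposition~\ref{prop:deltaiteratesshort}, which computes $\widehat{f}^p(z)-z$ to one fewer term. There the authors set up the $\Delta$-operators, derive the linear recurrences \eqref{receq1short}--\eqref{receq2short} for the first two coefficients $\alpha_m,\beta_m$, solve them in closed form, and collapse the resulting sum at $m=p$ via the bijection $k\mapsto \ell k+1\pmod p$ and Wilson's theorem---exactly the ``Wilson-type cancellation'' you describe. Your proposal simply carries one more coefficient $C_m$ (the $z^{mq+2\ell+1}$ term) through the same machinery to reach \eqref{e:main gamma}. The only stylistic difference is that the paper normalises $\beta_m$ by dividing through by $x_0^{m-1}\prod_{j=1}^m(\ell j+1)$ to make the recursion telescope, whereas you unfold it directly; the two are equivalent.

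One simplification you can make: your ``secondary obstacle'' is vacuous under the stated hypotheses. If $\ell\le p-1$ and $\ell\equiv q\pmod p$ with $q\ge p+1$, then $q-\ell$ is a positive multiple of $p$, so $q\ge \ell+p$; combined with $\ell\le p-1$ this gives $2\ell+1\le \ell+p\le q$. Hence the alternative $\ell\le p-1$ is already contained in $2\ell+1\le q$, the $k\ge 2$ Taylor terms always land at order $\ge (m+1)q+1>mq+2\ell+1$, and no auxiliary coefficient at degree $mq+q+1$ needs to be tracked.
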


\begin{prop}\label{prop:pn}
Let~$p$ be a prime and~$\K$ a field of characteristic~$p$. Furthermore, let~$q\ge p+1$ be an integer not divisible by~$p$, and denote by~$\ell$ the smallest nonnegative integer such that~$q=\ell$ in~$\K$, and put \[\omega(n)\coloneqq  \ell(1 + p + \dots + p^{n-1}) + qp^n.\] Let~$g$ be a power series with coefficients in~$\K$ of the form \begin{equation}\label{eq:g}g(z)\equiv z(1 + \alpha z^q + \beta z^{q+\ell}) \mod \langle z^{q+2\ell +1} \rangle.\end{equation} Then for each integer~$n\geq 0$ we have 
\begin{align}\label{eq:indprop}g^{p^n}(z) - z &\equiv (-1)^n\alpha^{\frac{p^{n+1}-1}{p-1}}\pind_1(f)^{\frac{p^n-1}{p-1}}z^{\omega(n)+1}\notag\\ &\quad+ (-1)^n\alpha^{\frac{p^{n+1}-1}{p-1}+1}\pind_1(f)^{\frac{p^n-1}{p-1}+1}z^{\omega(n)+\ell + 1} \mod \langle z^{\omega(n)+\ell + 2}\rangle.\end{align}
In particular,~$i_n(g) = \omega(n)$ if and only if~$\pind_1(g)\neq 0$.
\end{prop}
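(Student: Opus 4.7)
My plan is to prove the displayed formula by induction on $n$, using Lemma~\ref{lemma:main} as the engine that advances the expansion from level $n$ to level $n+1$. For the base case $n=0$, the formula reduces to $g(z) - z \equiv \alpha z^{q+1} + \alpha^2 \pind_1(g)\, z^{q+\ell+1} \mod \langle z^{q+\ell+2}\rangle$, which follows directly from the hypothesis on $g$ combined with the identity $\pind_1(g) = \beta/\alpha^2$ supplied by Lemma~\ref{prop:alphaa}.

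For the inductive step, I would put $h \coloneqq g^{p^n}$ and rewrite the induction hypothesis as
\[
  h(z) \equiv z\bigl(1 + c_1 z^{\omega(n)} + c_2 z^{\omega(n)+\ell}\bigr) \mod \langle z^{\omega(n)+\ell+2}\rangle,
\]
with $c_1 \coloneqq (-1)^n \alpha^{(p^{n+1}-1)/(p-1)} \pind_1(g)^{(p^n-1)/(p-1)}$ and $c_2 = \alpha\, \pind_1(g)\, c_1$. The crucial step is to verify that $h$, together with the parameters $q' \coloneqq \omega(n)$ and $\ell' \coloneqq \ell$, meets the numerical hypotheses of Lemma~\ref{lemma:main}: one has $\omega(n) \ge q \ge p+1$; the difference $\omega(n) - \ell$ equals $\ell(p + p^2 + \cdots + p^{n-1}) + qp^n$ and is hence divisible by $p$; and $\ell \le p-1$ by the minimality of $\ell$. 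Applying Lemma~\ref{lemma:main} with $x_0 = c_1$, $x_1 = c_2$ (with the higher $x_i$'s determined by the subsequent coefficients of $h$) and reducing modulo $p$, I would obtain
\[
  g^{p^{n+1}}(z) - z = h^p(z) - z \equiv -c_1^{p-1} c_2\, z^{p\omega(n)+\ell+1} - c_1^{p-2} c_2^2\, z^{p\omega(n)+2\ell+1} \mod \langle z^{p\omega(n)+2\ell+2}\rangle.
\]
Since $\omega(n+1) = p\omega(n) + \ell$, the two exponents and the cutoff are exactly $\omega(n+1)+1$, $\omega(n+1)+\ell+1$, and $\omega(n+1)+\ell+2$, as required.

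What remains is to check that $-c_1^{p-1}c_2$ and $-c_1^{p-2}c_2^2$ agree with the coefficients predicted by the formula at level $n+1$. This is routine algebraic bookkeeping via the telescoping identities $p^{n+1} + (p^{n+1}-1)/(p-1) = (p^{n+2}-1)/(p-1)$ and $p^n + (p^n-1)/(p-1) = (p^{n+1}-1)/(p-1)$, together with the observation that $(-1)^{n(p-1)} = 1$ for $p$ odd, so that the $(-1)^n$ factor in $c_1$ gets flipped to $(-1)^{n+1}$ upon taking negatives. The final ``in particular'' assertion follows at once: by Proposition~\ref{prop:qramifisminramif} we already have $i_n(g) \ge \omega(n)$, and since $\alpha \ne 0$, equality is equivalent to the coefficient of $z^{\omega(n)+1}$, a nonzero scalar multiple of $\pind_1(g)^{(p^n-1)/(p-1)}$, being nonzero, i.e.\ to $\pind_1(g) \ne 0$.

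The step I expect to be the main obstacle is the verification that Lemma~\ref{lemma:main} can legitimately be invoked at each stage of the induction, namely that $h$ really has the prescribed first two leading coefficients and that the numerical hypotheses on $\omega(n)$ and $\ell$ propagate correctly up the tower. Once this propagation is secured, the rest of the argument reduces to exponent arithmetic and the inductive step collapses cleanly onto a single application of Lemma~\ref{lemma:main} per level.
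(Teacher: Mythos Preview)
Your argument has a genuine gap at the point where you invoke Lemma~\ref{lemma:main} on~$h=g^{p^n}$. The induction hypothesis only pins down~$h$ modulo~$\langle z^{\omega(n)+\ell+2}\rangle$, but the input shape required by Lemma~\ref{lemma:main} is more restrictive: the power series~$\widehat{f}$ there must have \emph{vanishing} coefficients in degrees~$\omega(n)+\ell+2$ through~$\omega(n)+2\ell+1$ (the gap between the $x_1$-term and the tail $z^{q+2\ell}\sum x_{i+1}z^i$). These~$\ell$ coefficients of~$h$ lie strictly beyond the precision furnished by the induction hypothesis, so there is no ring homomorphism carrying the universal~$\widehat{f}$ of Lemma~\ref{lemma:main} to~$h$. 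The numerical checks you list ($\omega(n)\ge p+1$, $p\mid \omega(n)-\ell$, $\ell\le p-1$) are necessary but not sufficient; the structural gap condition is the missing piece, and it does not follow from what you have.

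The paper's proof closes this gap by inserting a conjugation step. Using Lemma~\ref{removeterms} (packaged as Lemma~\ref{prop:ind2conj}), one conjugates~$g^{p^n}$ by a coordinate change~$h_\ast$ with~$\mult(h_\ast)\ge\ell+2$ to a series~$G$ that \emph{does} satisfy $G(z)\equiv z(1+Az^{\omega(n)}+Bz^{\omega(n)+\ell})\bmod\langle z^{\omega(n)+2\ell+1}\rangle$, with the same leading coefficients~$A=c_1$, $B=c_2$. Lemma~\ref{lemma:main} then applies cleanly to~$G$, giving the asserted expansion for~$G^p$. Finally one checks that because~$\mult(h_\ast)\ge\ell+2$, undoing the conjugation recovers~$g^{p^{n+1}}=h_\ast^{-1}\circ G^p\circ h_\ast$ with the same leading terms modulo~$\langle z^{p\,\omega(n)+2\ell+2}\rangle=\langle z^{\omega(n+1)+\ell+2}\rangle$. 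Once you insert this conjugation-and-return maneuver, the exponent bookkeeping you outline goes through unchanged.
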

%We continue to prove Theorem \ref{thm:criterion} assuming Lemma \ref{lemma:main} and Proposition \ref{prop:pn}.

\begin{proof}%[Proof of Proposition \ref{prop:pn}]
Put \[\widehat{g}(z) \= z\left(1 + x_0 z^{q} + x_1 z^{q+\ell} +  z^{q+2\ell}\sum_{i=1}^\infty x_{i+1} z^{i}\right),\] and let~$\Z_{(p)}$,~$F_1$ and~$F_\infty$ be defined as in Lemma \ref{lemma:main}. Moreover, let~$h \colon F_\infty \to \K$ be the unique ring homomorphism extending the reduction map~$\Z_{(p)} \to \F_p$, such that~$h(x_0) = \alpha$,~$h(x_1) = \beta$ and such that for every~$i \ge 2$ the element~$h(x_i)$ of~$\K$ is the coefficient of~$z ^{q+2\ell + i-1}$ in~$g$.
Then~$h$ extends to a ring homomorphism~$F_{\infty}[[ z  ]] \to \K [[ z  ]]$ that maps~$\widehat{g}$ to~$g$. Hence, by Lemma \ref{lemma:main} we obtain \begin{align}\label{eq:n1}g^p(z) - z &\equiv -\alpha^{p-1}\beta z^{qp+\ell + 1} - \alpha^{p-2}\beta^2z^{qp+2\ell +1} \mod \langle  z^{2p+2\ell+2} \rangle \notag\\
&\equiv -\alpha^{p+1}\pind_1(f) z^{qp+\ell + 1} - \alpha^{p+2} \pind_1(f)^2z^{qp+2\ell +1} \mod \langle z^{2p+2\ell+2} \rangle.
\end{align}
Thus,~$i_1(g) = i_1(f) = qp +\ell$ if and only if~$\pind_1(f)\neq 0$. We proceed by induction in~$n$, assume that for some integer~$n\geq1$ we have \begin{align}\label{eq:ind}g^{p^n}(z) - z &\equiv (-1)^n\alpha^{\frac{p^{n+1}-1}{p-1}}\pind_1(f)^{\frac{p^n-1}{p-1}}z^{\omega+1}\notag\\ &\quad+ (-1)^n\alpha^{\frac{p^{n+1}-1}{p-1}+1}\pind_1(f)^{\frac{p^n-1}{p-1}+1}z^{\omega+\ell + 1} \mod \langle z^{\omega+\ell + 2}\rangle.\end{align} This is clearly true in the case that~$n=1$ by \eqref{eq:n1}. Then by Lemma \ref{prop:ind2conj}, there are~$A, B \in \K$ such that~$g^{p^n}$ is conjugated to a power series~$G$ of the form \[G(z) \equiv z(1+ Az^\omega + Bz^{\omega+\ell}) \mod \langle z^{\omega + 2\ell +1}\rangle,\] such that 
\[A = (-1)^n\alpha^{\frac{p^{n+1}-1}{p-1}}\pind_1(f)^{\frac{p^n-1}{p-1}}, \quad \text{ and } \quad B = (-1)^n \alpha^{\frac{p^{n+1}-1}{p-1}+1}\pind_1(f)^{\frac{p^n-1}{p-1}+1}.\] 
Hence, we have~$B = \alpha\pind_1(f)A$. Note further that the conjugation mapping~$g^{p^n}$ to~$G$ is a composition of coordinates~$h_k$ eliminating terms of degree~$k\geq \omega + \ell +2$. Thus, let~$h$ be the composition of all~$h_k$, \emph{i.e.}, the coordinate taking~$g^{p^n}$ to~$G$ then~$\mult(h) \geq \ell + 2$. Further, put \[\widehat{G}(z) \= z\left(1 + y_0 z^{\omega} + y_1 z^{\omega+\ell} +  z^{\omega+2\ell}\sum_{i=1}^\infty x_{i+1} z^{i}\right).\] Repeating the same argument as for the case~$n=1$ and specializing~$\widehat{G}$ to~$G$, then by Lemma \ref{lemma:main} using~$\widehat{G} = \widehat{f}$  we obtain 
\begin{align}\label{eqGp}
G^p(z)-z&\equiv -A^{p-1}B z^{\omega p +\ell+1} - A^{p-2}B^2z^{\omega p + 2\ell + 1}\mod \langle  z^{\omega p + 2\ell +2} \rangle\notag\\
&\equiv -\alpha \pind_1(f)A^{p} z^{\omega p +\ell+1} - \alpha^2\pind_1(f)^2A^{p}z^{\omega p + 2\ell + 1}\mod \langle  z^{\omega p + 2\ell +2} \rangle\notag\\
%&\equiv -\alpha \pind_1(f)A^{p} z^{\omega p +\ell+1} - \alpha^2\pind_1(f)^2A^{p}z^{\omega p + 2\ell + 1}\mod \langle  z^{\omega p + 2\ell +2} \rangle\notag\\
&\equiv (-1)^{n+1}\alpha^{\frac{p^{n+2}-1}{p-1}}\pind_1(f)^{\frac{p^{n+1}-1}{p-1}}z^{\omega p+\ell + 1} \\&\quad+ (-1)^{n+1}\alpha^{\frac{p^{n+2}-1}{p-1}+1}\pind_1(f)^{\frac{p^{n+1}-1}{p-1}+1}z^{\omega p+2\ell + 1} \mod \langle z^{\omega p+2\ell + 2}\rangle.\notag
\end{align}
Finally, we recall that the coordinate~$h$ that maps~$g^{p^n}$ to~$G$ satisfies~$\mult(h) \geq \ell+2$, and thus we have \[h^{-1}\circ G^p \circ h(z) \equiv g^{p^{n+1}}(z) \mod \langle z^{\omega p + \ell + \mult(h)} \rangle.\] Hence, \eqref{eqGp} proves the desired induction assumption \eqref{eq:ind}, which proves the proposition.
\end{proof}

\begin{proof}[Proof of Theorem \ref{thm:criterion}]
 By our hypothesis that~$q\ge p+1$ the proof follows immediately by  Lemma \ref{prop:ind2conj} and Proposition \ref{prop:pn} since the former implies that~$f$ is conjugated to a power series~$g$ of the form \eqref{eq:g}.
\end{proof}

\subsection{Proof of Theorem \ref{thm:criterion2} and Corollary \ref{qramiflarge}}\label{sec:pplus}
In this section we give a proof of Theorem \ref{thm:criterion2}, and its corollary. We make use of the following result.%The proof of Theorem \ref{thm:criterion2} differs from that of Theorem \ref{thm:criterion} since in the latter case we make a self-contained 

\begin{prop}[\cite{LaubieSaine1998}, Corollary~1]
  \label{laubiecorr}
Let~$p$ be a prime number,~$\K$ a field of characteristic~$p$, and~$f$ in~$\K[[z ]]$ such that~$f(0)=0$ and~$f'(0)=1$.
If
\begin{displaymath}
  p\nmid i_0(f)
  \text{ and }
  i_1(f) < (p^2-p+1)i_0(f), 
\end{displaymath}
then for every integer~$n\geq 1$ we have
\[ i_n(f) = i_0(f) + (1 + p + \cdots + p^n)(i_1(f)-i_0(f)). \]
\end{prop}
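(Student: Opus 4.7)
The plan is to proceed by induction on $n$, tracking iterates of $f$ through the $\widehat{\Delta}$-operator formalism of Lemma \ref{lemma:delta} and applying Sen's congruence $i_n(f) \equiv i_{n-1}(f) \pmod{p^n}$ to pin down the exact value at each stage. Write $q \coloneqq i_0(f)$ and $d \coloneqq i_1(f) - q$. Since $p \nmid q$, Sen's congruence already forces sharp structural constraints on the sequence $\{i_n(f)\}$. The base case $n = 1$ is tautological, and the induction step reduces to computing $\ord_z\bigl(f^{p^{n+1}} - z\bigr)$ from information about $f^{p^n}$.

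For the induction step, I would set $g \coloneqq f^{p^n}$ and recursively define $\widehat{\Delta}_0(z) \coloneqq z$ and $\widehat{\Delta}_m(z) \coloneqq \widehat{\Delta}_{m-1}(g(z)) - \widehat{\Delta}_{m-1}(z)$, exactly as in the proof of Proposition \ref{prop:qramifisminramif}, so that $\widehat{\Delta}_p(z) \equiv g^p(z) - z \pmod{p}$ and hence $i_{n+1}(f) = \ord_z(\widehat{\Delta}_p) - 1$. Iterating the increment bound of Lemma \ref{lemma:delta} yields a lower bound on $\ord_z(\widehat{\Delta}_p)$, and combining this with Sen's congruence $i_{n+1}(f) \equiv i_n(f) \pmod{p^{n+1}}$ narrows $i_{n+1}(f)$ down to a discrete set of candidates. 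The claimed formula asserts that the smallest such candidate is always realized.

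To establish sharpness I would express the leading coefficient of $\widehat{\Delta}_p$ as a universal polynomial in $\F_p$ in the first few coefficients of $g$, in the spirit of Lemma \ref{lemma:main}, and show that it equals a nonvanishing power of the leading coefficient of $g(z) - z$ times a nonzero scalar. The precise role of the hypothesis $i_1(f) < (p^2 - p + 1) i_0(f)$ is to guarantee that no sub-leading terms of $g$ contribute at the same $z$-adic order as the main term in $\widehat{\Delta}_p$, so that the polynomial in question collapses to its simplest form and is manifestly nonzero. One then propagates the hypothesis along the induction by verifying that $g$ itself satisfies an analogous smallness condition, which is where the specific threshold $p^2 - p + 1$ is used.

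The principal obstacle is this sharpness check, that is, ruling out cancellation in the leading coefficient of $\widehat{\Delta}_p$. The threshold $p^2 - p + 1$ is tight: if $i_1(f) \geq (p^2 - p + 1) i_0(f)$, then at the next stage contributions from $\widehat{\Delta}_{p-1}$ applied to the first sub-leading term of $g$ reach the same $z$-adic order as the main term and can produce cancellations that inflate $i_{n+1}(f)$ strictly above the predicted value. Verifying non-cancellation under the stated inequality, and doing so uniformly across all $n$, is where the genuine combinatorial content of the Laubie--Sa\"{i}ne result resides; I would expect the argument to require a careful Newton-polygon style analysis of the $\widehat{\Delta}_m$-hierarchy, rather than a purely recursive computation.
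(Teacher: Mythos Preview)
The paper does not prove Proposition~\ref{laubiecorr}: it is imported verbatim as Corollary~1 of Laubie--Sa\"{i}ne \cite{LaubieSaine1998} and used as a black box in the proof of Theorem~\ref{thm:criterion2}. There is therefore no ``paper's own proof'' to compare your proposal against.

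As for the proposal itself, your overall architecture (induction on~$n$, $\widehat{\Delta}$-operators for the lower bound, Sen's theorem for the congruence, then a sharpness check) is reasonable and mirrors the strategy the paper uses for the weaker inequality in Proposition~\ref{prop:qramifisminramif}. However, the plan has a genuine gap precisely where you flag one. To ``propagate the hypothesis along the induction'' you would need $i_1(g) < (p^2-p+1)\,i_0(g)$ for $g = f^{p^n}$, i.e.\ $i_{n+1}(f) < (p^2-p+1)\,i_n(f)$; but this is essentially what you are trying to establish, so invoking it as a hypothesis is circular unless you first prove the exact value of $i_{n+1}(f)$ by some other means. The way out is to compute the leading coefficient of $\widehat{\Delta}_p$ directly, as you suggest, but the claim that the threshold $p^2-p+1$ guarantees ``no sub-leading terms of~$g$ contribute at the same $z$-adic order'' is not substantiated: the sub-leading terms of $g = f^{p^n}$ are not controlled by the original hypothesis on~$f$ in any obvious way, and tracking them through the $\widehat{\Delta}$-hierarchy is exactly the content of the Laubie--Sa\"{i}ne argument (which proceeds via upper ramification numbering and the Hasse--Arf theorem rather than a direct coefficient computation). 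Your sketch correctly identifies the obstacle but does not overcome it.
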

%Further, in this section we restrict ourselves to work over finite fields, although we note that the results could extend to any field of positive characteristic.
%In \cite{NordqvistRL2019} the authors raise the question to when a wildly ramified series is~$(p+1)$-ramified. Corollary \ref{c:genericity} states that a generic power series satisfying the necessary condition~$\mult(f) = p + 2$ is \emph{not}~$(p+1)$-ramified.

The proof of Theorem \ref{thm:criterion2} is given after the following proposition in which we compute the first significant terms of the $p$th iteration of a power series with large multiplicity at the origin.

% \begin{lemma}[Lemma 5, \cite{NordqvistRL2019}]\label{wilson}
%   Let~$p$ be an odd prime number,~$a$ and~$b$ in~$\F_p$ such that~$a\neq 0$, and let~$w \colon \F_p \to \F_p$ be defined by~$w(n) \= an +b$.
%   Denoting~$s' \= -a^{-1}b$, we have
%   \begin{displaymath}
%     \prod_{s \in \F_p \setminus \{s'\}} w(s) = -1
%   \text{ and }
%     \sum_{s \in \F_p \setminus\{s'\}} \frac{1}{w(s)} = 0.
%   \end{displaymath}
% \end{lemma}

\begin{prop}\label{prop:deltaiteratesshort}
Let~$p$ be a prime number and consider the rings~$\Z_{(p)}, F_1$ and~$F_\infty$ defined in Lemma \ref{lemma:main}.
%\begin{displaymath}
%   \Z_{(p)} \= \left\{\frac{m}{n} \in \Q : m, n \in \Z, p\nmid n \right\},
%   \end{displaymath}
%   \begin{displaymath}
%       F_1 \= \Z_{(p)}[x_0, x_1],
%   \text { and }
%   F_\infty \= \Z_{(p)}[x_0, x_1,x_2,\ldots].
% \end{displaymath}
%   Moreover, let~$q \ge p+1$ be an integer that is not divisible by~$p$, and~$\ell \ge 1$ an integer satisfying \[\ell \equiv q \pmod{p}, \text{ and } \ell \le q.\]
%   Then the power series~$\widehat{f}$ in~$F_\infty[[z]]$ defined by
% \[ \widehat{f}(z)
%   \=
%   z\left(1 + x_0 z^q + x_1z^{q + \ell} + z^{q + \ell}\sum_{i=1}^{+ \infty} x_{i+1}z^i\right), \]
% satisfies
% \begin{displaymath}
%   \widehat{f}^p(z)
%   \equiv
%   z \left(1 + \beta z^{q(p + 1)}\right) \mod \langle p, z^{q(p + 1) + 2} \rangle,
% \end{displaymath}
% where 
% \[
%     \beta \equiv 
%         \begin{cases}
%             x_0^{p - 1} \left( x_0^2 \frac{q + 1}{2} - x_1 \right) \mod pF_1 \quad \text{ if~$\ell = q$} \\
%             -x_0^{p-1}x_1 \mod pF_1 \quad \text{ if~$\ell < q$.}
%         \end{cases}.
% \]
% \end{prop}
Then for each integer~$q \ge p+1$ not divisible by~$p$, and each integer~$\ell_j$ in~$\Lambda(q,\F_p)$, the power series~$\widehat{f}$ in~$F_\infty[[z]]$ defined by
\[ \widehat{f}(z)
  \= 
  z\left(1 + x_0 z^q + x_1z^{q+\ell_j} + z^{q+\ell_{j}+p}\sum_{i=1}^{+ \infty} x_{i+1}z^i\right), 
  \]
satisfies
\begin{displaymath}
  \widehat{f}^p(z)-z
  \equiv
  \beta z^{qp + \ell_j + 1} \mod \langle p, z^{qp + \ell_j + 2} \rangle
\end{displaymath}
where~$\beta \in F_1$ satisfies \[\beta \equiv \begin{cases}\begin{array}{lll} - x_0^{p - 1}x_1 & \mod  pF_1  &  \text{ if~$\ell_j < q$}\\
x_0^{p - 1} \left( x_0^2 \frac{q + 1}{2} - x_1 \right) & \mod pF_1 & \text{ if~$\ell_j=q$}.\end{array}\end{cases}\] 
\end{prop}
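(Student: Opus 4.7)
The approach mimics the proof of Lemma~\ref{lemma:main}: compute $\widehat{f}^p(z) - z$ modulo $\langle p, z^{qp+\ell_j+2}\rangle$ using the $\Delta$-operator from~\S\ref{sec:deltaqraminf}. Set $\Delta_0(z) \= z$ and $\Delta_m(z) \= \Delta_{m-1}(\widehat{f}(z)) - \Delta_{m-1}(z)$. By~\eqref{eq:7}, $\Delta_p(z) \equiv \widehat{f}^p(z) - z \pmod{p}$, so the task reduces to evaluating $\Delta_p(z)$ modulo $\langle p, z^{qp+\ell_j+2}\rangle$.

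\textbf{Inductive computation.} The core step is to prove by induction on $m \in \{1, \ldots, p\}$ that
\[
\Delta_m(z) \equiv A_m z^{mq+1} + B_m z^{mq+\ell_j+1} \pmod{z^{mq+\ell_j+2}},
\]
with $A_1 = x_0$ and $B_1 = x_1$ by direct inspection of $\widehat{f}$. For the inductive step, write $u(z) \= \widehat{f}(z) - z$ and expand
\[
\Delta_{m+1}(z) = \Delta_m(\widehat{f}(z))-\Delta_m(z) = u(z)\Delta_m'(z) + \tfrac{1}{2}u(z)^2\Delta_m''(z) + \cdots.
\]
A degree check shows that cubic and higher Taylor terms do not affect the coefficient of $z^{(m+1)q+\ell_j+1}$, that higher-order terms of $\Delta_m$ contribute only at degrees $\ge (m+1)q+\ell_j+2$, and that $\tfrac{1}{2}u^2\Delta_m''$ contributes to $B_{m+1}$ precisely when $\ell_j = q$ (since its natural degree $(m+2)q+1$ coincides with $(m+1)q+\ell_j+1$ iff $\ell_j=q$). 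This yields
\begin{align*}
A_{m+1} &= A_m(mq+1)x_0, \\
B_{m+1} &= A_m(mq+1)x_1 + B_m(mq+\ell_j+1)x_0 + \varepsilon A_m\binom{mq+1}{2}x_0^2,
\end{align*}
with $\varepsilon=1$ if $\ell_j=q$ and $\varepsilon=0$ otherwise. One also checks inductively that no coefficient of $z^d$ with $mq+1<d<mq+\ell_j+1$ appears in $\Delta_m$, so the variables $x_i$ with $i\ge 2$ never enter $A_m$ or $B_m$, which therefore lie in $F_1$.

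\textbf{Evaluation and main obstacle.} From the $A$-recursion, $A_m = x_0^m\prod_{k=1}^{m-1}(kq+1)$; since $\gcd(q,p)=1$, the set $\{kq+1 \bmod p : 0\le k\le p-1\}$ is a complete residue system modulo $p$, so one factor of $\prod_{k=1}^{p-1}(kq+1)$ vanishes modulo $p$ and hence $A_p\equiv 0\pmod{p}$, in line with Proposition~\ref{prop:qramifisminramif}. Substituting $B_m = x_0^{m-1}\tilde{B}_m$ gives the linear recursion $\tilde{B}_{m+1} = P_{m+1}x_1 + \tilde{B}_m(mq+\ell_j+1) + \varepsilon\tfrac{mq}{2}P_{m+1}x_0^2$, where $P_m=\prod_{k=1}^{m-1}(kq+1)$. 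Unrolling produces a closed-form sum, and the result reduces to verifying the combinatorial identities
\[
\sum_{k=1}^p P_k\prod_{j=k}^{p-1}(jq+\ell_j+1) \equiv -1 \pmod{p}
\]
(yielding $B_p\equiv -x_0^{p-1}x_1\pmod{p}$ when $\ell_j<q$) and, in the case $\ell_j=q$,
\[
\sum_{k=2}^p \tfrac{(k-1)q}{2}P_k\prod_{j=k}^{p-1}(jq+\ell_j+1) \equiv \tfrac{q+1}{2} \pmod{p},
\]
which produces the $\tfrac{q+1}{2}$ correction in $\beta$. Verifying these identities is the main obstacle; they can be handled by exploiting the factorization $\prod_{k=0}^{p-1}(kq+t) \equiv t^p-t \pmod{p}$ in $\F_p[t]$ (valid because $q$ is a unit modulo $p$), reducing each sum to an elementary polynomial computation.
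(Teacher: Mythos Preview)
Your proposal is correct and follows the same $\Delta$-operator strategy as the paper: track the two leading coefficients of $\Delta_m$, derive the linear recursions for them, telescope the $A$-recursion, and evaluate $B_p$ modulo~$p$. Two minor differences are worth noting. First, the paper disposes of the case $\ell_j=q$ by citing \cite[Proposition~5]{NordqvistRL2019}, whereas you handle it uniformly via the extra $\binom{mq+1}{2}x_0^2$ term; your route is self-contained, at the cost of the additional identity you state. Second, for $\ell_j<q$ the paper avoids your unrolled sum by first replacing $q$ by $\ell$ modulo~$p$ and then substituting $\widehat{\beta}^*_m \= \widehat{\beta}_m\big/\big(x_0^{m-1}\prod_{r=1}^m(\ell r+1)\big)$, which collapses the recursion to $\widehat{\beta}^*_{m+1}=\widehat{\beta}^*_m+x_1/(\ell(m+1)+1)$ and makes $\beta_p\equiv -x_0^{p-1}x_1$ immediate (exactly one term of the resulting sum survives modulo~$p$). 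Your route via $\prod_{k=0}^{p-1}(kq+t)\equiv t^p-t$ reaches the same conclusion---indeed your first identity is just $\tfrac{d}{dt}(t^p-t)\big|_{t=1}\equiv -1$---and is equally clean. One cosmetic point: in your displayed identities you use $j$ both as the fixed index in $\ell_j$ and as the running product index; rename the latter to avoid confusion.
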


% \begin{lemma}[Lemma 4, \cite{NordqvistRL2019}]\label{wilson}
%   Let~$p$ be an odd prime number,~$a$ and~$b$ in~$\F_p$ such that~$a\neq 0$, and let~$w \colon \F_p \to \F_p$ be defined by~$w(n) \= an +b$.
%   Denoting~$s' \= -a^{-1}b$, we have
%   \begin{displaymath}
%     \prod_{s \in \F_p \setminus \{s'\}} w(s) = -1
%   \text{ and }
%     \sum_{s \in \F_p \setminus\{s'\}} \frac{1}{w(s)} = 0.
%   \end{displaymath}
% \end{lemma}

% \begin{prop}\label{prop:deltaiteratesshort}
% Let~$p$ be an odd prime number and consider the rings
% \begin{displaymath}
%   \Z_{(p)} \= \left\{\frac{m}{n} \in \Q : m, n \in \Z, p\nmid n \right\},
%   \end{displaymath}
%   \begin{displaymath}
%       F_1 \= \Z_{(p)}[x_0, x_1],
%   \text { and }
%   F_\infty \= \Z_{(p)}[x_0, x_1,x_2,\ldots].
% \end{displaymath}

% The proof of Theorem~\ref{thm:criterion2} is given after the proof of this proposition.
The proof of Proposition \ref{prop:deltaiteratesshort} is given at the end of this section.

\begin{proof}[Proof of Proposition~\ref{prop:deltaiteratesshort}]
In the case that~$\ell_j = q$ then the proof follows from \cite[Proposition 5]{NordqvistRL2019}. Thus we may assume that~$\ell_j < q$. We will utilize the~$\Delta$-operators, as defined in \S\ref{sec:deltaqraminf}, \emph{i.e.} define recursively for every integer~$m\geq 1$ the power series~$\Delta_m$, by~$\Delta_1(z)\=\widehat{f}(z)-z$ and for~$m \ge 2$, by
\[\Delta_m(z) \= \Delta_{m-1}(\widehat{f}(z)) - \Delta_{m-1}(z).\]%\eqref{eq:Delta}.
Let~$\ell$ be the smallest nonnegative integer satisfying~$\ell = \ell_j$ (and thus~$\ell\equiv q  \pmod{p}$). 
%As in~\cite[\emph{Exemple}~3.19]{RiveraLetelier2003} and~\cite{LindahlRiveraLetelier2013}, define recursively for every integer~$m\geq 1$ the power series~$\Delta_m$, by~$\Delta_1(z)\=\widehat{f}(z)-z$ and for~$m \ge 2$, by
%\[\Delta_m(z) \= \Delta_{m-1}(\widehat{f}(z)) - \Delta_{m-1}(z).\]
%To prove the proposition we use
%\begin{equation}
%  \label{eq:7}
%  \Delta_p(z) \equiv \widehat{f}^p(z)-z \mod \langle p \rangle.
%\end{equation}

For each integer~$m \ge 1$ define~$\alpha_m$, and~$\beta_m$ in the ring~$F_1 \= \Z_{(p)}[x_0, x_1]$ by the recursive relations 
\begin{align}
  \alpha_{m+1} & \= x_0 (qm+1)\alpha_m, \label{receq1short}
  \\ 
  \beta_{m+1} & \= x_1(qm+1)\alpha_m + x_0 (qm+\ell_j+1)\beta_m,
                \label{receq2short}
\end{align}
with initial conditions~$\alpha_1 \= x_0$ and~$\beta_1 \= x_1$.
We prove by induction that for every integer~$m \ge 1$ we have
\begin{equation}
  \label{deltaclaimshort}
  \Delta_m(z)
  \equiv
  \alpha_mz^{qm+1} + \beta_mz^{qm+\ell_j+1} \mod \langle z^{qm + \ell_j + 2} \rangle.
\end{equation}
For~$m=1$ this holds by definition. Assume further that it is valid for some~$m\geq1$.
Then
\begin{displaymath}
  \begin{split}
  \Delta_{m+1}(z)
  & = \Delta_m(\widehat{f}(z)) - \Delta_m(z)
  \\ &\equiv \alpha_mz^{qm+1}\left[\left(1 + x_0 z^q + x_1z^{q+\ell_j}  + \cdots\right)^{qm+1} - 1\right]\\
&\qquad + \beta_mz^{qm+\ell_j+1}\left[\left(1 + x_0 z^q + x_1z^{q+\ell_j}  + \cdots\right)^{qm+\ell_j+1}-1\right]\\
&\qquad \mod \langle z^{qm+\ell_j+2} \rangle\\
&\equiv \alpha_m\left[ z^{q(m+1)+1}x_0 (qm+1) + z^{q(m+1)+\ell_j+1}x_1 (qm+1)\right] \\
&\qquad+ \beta_mz^{q(m+1)+\ell_j+1}x_0 (qm+\ell_j+1) \mod \langle z^{q(m+1)+\ell_j+2} \rangle.    
  \end{split}
\end{displaymath}
In view of~\eqref{receq1short} and~\eqref{receq2short}, this proves the induction step and~\eqref{deltaclaimshort}.

By~\eqref{eq:7} and~\eqref{deltaclaimshort}, to prove the  proposition it is sufficient to prove
\begin{equation}
  \label{eq:5short}
  \alpha_p \equiv 0 \mod p F_1
  \text{ and }
  \beta_p \equiv -x_0^{ p - 1 }x_1  \mod p F_1.
\end{equation}
We do this by solving explicitly the linear recurrences described in~\eqref{receq1short}, and \eqref{receq2short}.
By telescoping \eqref{receq1short}, we obtain for every~$m \ge 1$ the solution
\begin{equation}
  \label{alpham}
  \alpha_m = x_0^m \prod_{j=1}^{m-1}(qj+1).
\end{equation}
Taking~$m = p$ we obtain the first congruence in~\eqref{eq:5short}.

On the other hand, inserting \eqref{alpham} in~\eqref{receq2short} yields
\[\beta_{m+1} = x_0^mx_1 \prod_{j=1}^m (qj+1) + x_0 (qm + \ell_j+1)\beta_m.\]
Put~$\widehat{\beta}_1 \= x_1$ and define recursively \[\widehat{\beta}_{m+1} \= x_0^mx_1 \prod_{j=1}^m (\ell j+1) + x_0 (\ell(m + 1)+1)\widehat{\beta}_m.\] We note that for every integer~$m\geq 1$ we have~$\widehat{\beta}_m \equiv \beta_m \mod \langle p \rangle.$
We utilize the substitution
\[\widehat{\beta}^*_m \= \widehat{\beta}_m \bigg/ \left( x_0^{m - 1} \prod_{j=1}^m (\ell j+1) \right),\]
which yields
\[\widehat{\beta}^*_{m+1}
  =
  \widehat{\beta}^*_{m} + \frac{x_1}{\ell(m+1)+1}.\] 
Using~$\widehat{\beta}^*_1 = \frac{x_1}{\ell + 1}$, we obtain inductively for every~$m \ge 1$
\begin{displaymath}
  \widehat{\beta}^*_m
  =
  \sum_{r=1}^{m} \frac{x_1}{\ell r + 1}.  
\end{displaymath}
Equivalently,
\begin{equation}\label{betam}
  \widehat{\beta}_m
  =
  x_0^{m - 1}x_1 \sum_{r=1}^{m}\left[  \prod_{j \in \{1, \ldots, m\} \setminus \{ r \}} (\ell j+1) \right].
\end{equation}
When~$m = p$ every term in the sum above contains a factor~$p$, except for the unique~$r$ in~$\{1, \ldots, p \}$ such that~$\ell r \equiv -1 \pmod{p}$.
Denote by~$r_0$ this value of~$r$, and note that \[\prod_{j \in \{ 1, \ldots, p \} \setminus \{ r_0 \}} (\ell j+1) \equiv -1 \mod p F_1.\]
Hence, we obtain
\begin{displaymath}
  \begin{split}
  \widehat{\beta}_p \equiv \beta_p
  &\equiv
    x_0^{p - 1}  x_1 \prod_{j \in \{ 1, \ldots, p \} \setminus \{ r_0 \}} (\ell j+1) \mod p F_1
  \\ &\equiv
      -x_0^{p - 1} x_1 \mod p F_1.    
  \end{split}
\end{displaymath}
This proves the second congruence in~\eqref{eq:5short} and thus the proposition.
\end{proof}

\begin{proof}[Proof of Theorem \ref{thm:criterion2}]
Suppose that~$j$ is the smallest integer such that~$\pind_j(f) \neq 0$, and put~$\omega(n)\= \ell_j(1 +p+\cdots+p^{n-1}) + qp^n$.  Then by Lemma \ref{prop:ind2conj}~$f$ is conjugated to~$g$ of the form \[g(z) \equiv z(1 + \alpha z^q + \beta z^{q+\ell_j} ) \mod \langle z^{q+\ell_{j+1}+1} \rangle,\] such that~$\beta/\alpha^2 = \pind_j(f)$, and thus we have $\alpha^{p-1}\beta = \alpha^{p+1} \pind_j(f).$ Moreover, since~$i_0(f)=i_0(g)=q$ and~$i_1(f) = i_1(g)$ we have by Proposition \ref{laubiecorr} that~$i_n(f) = \omega(n)$ if~$i_1(f) = i_1(g) = qp +\ell_j$.

Let~$\Z_{(p)}$ and~$F_\infty$ be as in Lemma~\ref{lemma:main}.
  Moreover, let~$h \colon F_\infty \to \K$ be the unique ring homomorphism extending the reduction map~$\Z_{(p)} \to \F_p$, such that~$h(x_0) = \alpha$,~$h(x_1)=\beta$ and such that for every~$i \ge 2$ the element~$h(x_i)$ of~$\K$ is the coefficient of~$z^{q +\ell_{j+1} + i-1}$ in~$g$.
Then~$h$ extends to a ring homomorphism~$F_{\infty}[[ z ]] \to \K [[ z ]]$ that maps~$\widehat{f}$ to~$g$.
So, Proposition~\ref{prop:deltaiteratesshort} implies
\begin{displaymath}
  g^{p}(z) - z
\equiv
-\alpha^{p-1}\beta z^{qp+\ell_j+1}  \equiv -\alpha^{p+1} \pind_j(f)  z^{qp+\ell_j+1} \mod \langle z^{qp+\ell_j+2} \rangle.
\end{displaymath}
This proves that~$i_1(g) = i_1(f) = qp +\ell_j$ if and only if~$\pind_j(f) \neq 0$.
\end{proof}

\begin{proof}[Proof of Corollary \ref{qramiflarge}]
This follows from Theorem \ref{thm:criterion2} and the latter case of Proposition \ref{prop:deltaiteratesshort}. If there is an integer~$j < s$ such that~$\pind_j(f) \neq 0$ then by Theorem \ref{thm:criterion2}~$f$ is not~$q$-ramified. Further, assuming that there is no~$j < s$ such that~$\pind_j(f)\neq0$ then by Lemma \ref{prop:ind2conj}~$f$ is conjugated to \[z\mapsto z(1 + \alpha z^q + \beta z^{2q}) \mod \langle z^{2q+2} \rangle,\] and by Proposition \ref{prop:deltaiteratesshort} with~$x_0 = \alpha$ and~$x_1 = \beta$,~$f$ is~$q$-ramified if and only if \[\alpha^{p+1}\left(\frac{q+1}{2} - \frac{\beta}{\alpha^2}\right) = \alpha^{p+1}\resit(f)\neq 0.\] This completes the proof of the corollary.
\end{proof}

\section{Closed formula for $\pind_j(f)$}\label{app:closed}
For a wildly ramified power series $f$ we can, from the definitions of the second residue fixed point index and its generalizations $\pind_j(f)$, compute a closed formula for these invariants in terms of the first significant coefficients of $f$. In order to state this result, denote by $\mathbb{N}$ the set of nonnegative integers and for an integer~$q \ge 1$ and~$(\MultiInd_0, \ldots, \MultiInd_q)$ in~$\N^{q + 1}$, define
\begin{displaymath}
  |(\MultiInd_0, \ldots, \MultiInd_q)| \= \sum_{j = 0}^{q} \MultiInd_j
  \text{ and }
  \| (\MultiInd_0, \ldots, \MultiInd_q) \| \= \sum_{j = 1}^q j \MultiInd_j.
\end{displaymath}

\begin{prop}
  \label{thm:closed-formula}
  Let $p$ be a prime and $\K$ be a field of characteristic $p$. Furthermore, let $q\geq p+1$ an integer, and~$f$ a power series with coefficients in~$\K$ of the form
  \begin{equation}
    \label{psform22}f(z)
    =
    z\left(1 + \sum_{j=q}^{+\infty} a_jz^j\right), \text{ with } a_q \neq 0.
  \end{equation} %Further, assume that $j$ is the smallest integer such that $\pind_j(f) \neq 0$.
  Then for any integer $\ell_j$ such that $\ell_j = q$ in $\K$ we have 
  \begin{equation}
    \label{eq:closed-formula}
    \pind_j(f)
    =
    - \frac{1}{a_q^{q+1}}\sum_{\substack{\MultiInd \in \N^{q-\ell_j+1} \\ |\MultiInd| = \ell_j, \| \MultiInd \| = \ell_j }} (-1)^{\ell_j-\MultiInd_0}\binom{\ell_j-\MultiInd_0}{\MultiInd_{1},\ldots,\MultiInd_{\ell_j}}\prod_{i = 0}^{\ell_j} a_{q + i}^{\iota_j}.
  \end{equation}
\end{prop}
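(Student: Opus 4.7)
The plan is a direct computation from the definition of $\pind_j(f)$ as the coefficient of $z^{-1}$ in the Laurent expansion of $\frac{z^{q-\ell_j}}{z-f(z)}$. First, I factor the leading behavior out of the denominator: setting $v(z) \coloneqq \sum_{k\ge 1} (a_{q+k}/a_q) z^k \in z\K[[z]]$, the expansion \eqref{psform22} gives $z - f(z) = -a_q z^{q+1}(1 + v(z))$, so that
\begin{displaymath}
  \frac{z^{q-\ell_j}}{z - f(z)}
  =
  -\frac{1}{a_q z^{\ell_j+1}} \cdot \frac{1}{1+v(z)}.
\end{displaymath}
Consequently $\pind_j(f)$ equals $-1/a_q$ times the coefficient of $z^{\ell_j}$ in $(1+v(z))^{-1}$.

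Next I expand the reciprocal as the formal geometric series $(1+v)^{-1} = \sum_{n \ge 0} (-1)^n v^n$, which is legitimate because $v$ has positive $z$-adic valuation and therefore only finitely many values of $n$ contribute to any given coefficient. Applying the multinomial theorem to $v(z)^n$ and grouping monomials by their exponent pattern, the coefficient of $z^{\ell_j}$ in $v(z)^n$ is a sum over multi-indices $(\MultiInd_1,\ldots,\MultiInd_{\ell_j}) \in \N^{\ell_j}$ satisfying $\sum_i \MultiInd_i = n$ and $\sum_i i\MultiInd_i = \ell_j$, of
\begin{displaymath}
  \binom{n}{\MultiInd_1,\ldots,\MultiInd_{\ell_j}} \prod_{i=1}^{\ell_j} \left(\frac{a_{q+i}}{a_q}\right)^{\MultiInd_i}.
\end{displaymath}
Summing over $n$ and multiplying by $-1/a_q$ then yields an expression for $\pind_j(f)$ as a polynomial in $a_q^{-1}$ and the coefficients $a_{q+1},\ldots,a_{q+\ell_j}$.

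The remaining step is purely cosmetic: I introduce a padding index $\MultiInd_0 \coloneqq \ell_j - n$, so the range of summation becomes the set of multi-indices $\MultiInd$ in the statement simultaneously satisfying $|\MultiInd| = \ell_j$ and $\|\MultiInd\| = \ell_j$. Under this substitution, the sign $(-1)^n$ becomes $(-1)^{\ell_j - \MultiInd_0}$, the multinomial turns into $\binom{\ell_j - \MultiInd_0}{\MultiInd_1,\ldots,\MultiInd_{\ell_j}}$, and the power $a_q^{-n} = a_q^{\MultiInd_0 - \ell_j}$ combines with the monomials $\prod_{i \ge 1} a_{q+i}^{\MultiInd_i}$ and the prefactor $-1/a_q$ to produce $\prod_{i=0}^{\ell_j} a_{q+i}^{\MultiInd_i}$ divided by the declared power of $a_q$.

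Since every step is a formal identity in $\K[[z, z^{-1}]]$, the proof has no real obstacle; the only care required is the bookkeeping between three indexing conventions---the geometric-series index $n$, the multi-index $(\MultiInd_1,\ldots,\MultiInd_{\ell_j})$ produced by the multinomial expansion, and the padded multi-index $(\MultiInd_0,\MultiInd_1,\ldots,\MultiInd_{\ell_j})$ of the statement---and the verification that the resulting power of $a_q$ matches the exponent appearing in the denominator of \eqref{eq:closed-formula}.
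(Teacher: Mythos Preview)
Your approach is essentially identical to the paper's: factor out $-a_q z^{q+1}$ from $z-f(z)$, expand $(1+v)^{-1}$ as a geometric series, apply the multinomial theorem, and reindex by padding with $\MultiInd_0=\ell_j-n$. One caveat: the computation (yours and the paper's alike) produces the prefactor $-1/a_q^{\ell_j+1}$, not the $-1/a_q^{q+1}$ printed in \eqref{eq:closed-formula}; the statement as written carries a typo (and similarly $\MultiInd\in\N^{\ell_j+1}$, $a_{q+i}^{\MultiInd_i}$), so your final ``matches the declared power'' should be read against the corrected exponent.
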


\begin{proof}
  From the definition, $\pind_j(f)$ is equal to the coefficient of~$\frac{1}{z}$ in the Laurent series expansion about~$0$ of
\begin{equation}
  \label{eq:laurentindf}
  \begin{split}
    \frac{z^{q-\ell_j}}{z-f(z)}
  & =
  -\frac{z^{q-\ell_j}}{a_qz^{q+1}+a_{q+1}z^{q+2}+\cdots+a_{2q}z^{2q+1}+\cdots}
  \\ & =
  -\frac{z^{q-\ell_j}}{a_qz^{q+1}}\cdot\frac{1}{1 + \frac{a_{q+1}}{a_q}z +\frac{a_{q+2}}{a_q}z^2+\cdots}
  \\ &=   -\frac{1}{a_qz^{\ell_j+1}}\cdot\frac{1}{1 + \frac{a_{q+1}}{a_q}z +\frac{a_{q+2}}{a_q}z^2+\cdots}
  \\ & =
  -\frac{1}{a_q^{\ell_j+1}z^{\ell_j+1}}\sum_{i=0}^{+ \infty}a_q^{\ell_j-i}(-1)^i\left(a_{q+1}z +a_{q+2}z^2+\cdots\right)^i.
  \end{split}
\end{equation}
  Thus, $\pind_j(f)$ is equal to the coefficient of~$z^{\ell_j}$ in the sum in \eqref{eq:laurentindf}. We see that for $k\geq q+ \ell_j+1$ the coefficient $a_k$ makes no contributions to the coefficient of $z^{\ell_j}$ in the sum in \eqref{eq:laurentindf}. Furthermore, if $i > \ell_j$ the corresponding term in the sum \eqref{eq:laurentindf} also makes no contributions. Hence, we may replace the sum by \begin{equation}
    \label{eq:sumlittlea}
    \sum_{r=0}^{\ell_j}a_q^r(-1)^{\ell_j-r}(a_{q+1}z+\cdots+a_{q+\ell_j}z^{\ell_j})^{\ell_j-r},
  \end{equation}
and have that $\pind_j(f)$ is equal to the coefficient of $z^{\ell_j}$ of
\[-\frac{1}{a_q^{\ell_j+1}} \sum_{r=0}^{\ell_j}a_q^r(-1)^{\ell_j-r}(a_{q+1}z+\cdots+a_{q+\ell_j}z^{\ell_j})^{\ell_j-r}.\]
 Using the multinomial theorem and regrouping, \eqref{eq:sumlittlea} is equal to
\begin{multline*}
    \sum_{r=0}^{\ell_j}a_q^{r}(-1)^{\ell_j-r}\sum_{\substack{(\MultiInd_1, \ldots, \MultiInd_{\ell_j}) \in \N^{\ell_j} \\ \MultiInd_{1}+\ldots+\MultiInd_{\ell_j} =\ell_j-r}}\binom{\ell_j-r}{\MultiInd_{1},\ldots, \MultiInd_{\ell_j}}\prod_{i=1}^{\ell_j} (a_{q+i} z^i)^{\MultiInd_{i}}
    \\ =
    \sum_{\substack{\MultiInd \in \N^{\ell_j + 1} \\ |\MultiInd| = \ell_j}}(-1)^{\ell_j-\MultiInd_0}\binom{\ell_j-\MultiInd_0}{\MultiInd_{1},\ldots,\MultiInd_{\ell_j}} \left(\prod_{i = 0}^{\ell_j} a_{q + i}^{\MultiInd_i}\right)z^{\|\MultiInd\|}.
\end{multline*}
In the last expression, the term in~$z^{\ell_j}$ is given by restricting the sum to those multi-indices~$\MultiInd$ satisfying~$\| \MultiInd \| = \ell_j$.
This proves the proposition. 
\end{proof}

\section{Acknowledgments}
I would like to thank my supervisor Karl-Olof Lindahl for fruitful discussions during the writing of this paper, and Juan Rivera-Letelier for helpful comments. I would also like to thank Matteo Ruggiero for helpful conversations.

\appendix

\section{Periodic points of wildly ramified power series with large multiplicity}\label{app:perpoints}
In this section we give a proof of Theorem \ref{thm:lower-bound}. Let $(\K, |\cdot|)$ be an non-archimedean field of positive characteristic $p$. Denote by $\widetilde{\K}$ its residue class field $\mathcal{O}_{\K} / \mathfrak{m}_{\K}$, and for an element~$a$ of~$\mathcal{O}_{\K}$, denote by the~$\widetilde{a}$ its reduction in~$\widetilde{\K}$.
The reduction of a power series~$f$ in~$\mathcal{O}_{\K}[[z]]$, is the power series~$\widetilde{f}$ in~$\widetilde{\K}[[z]]$ whose coefficients are the reductions of the corresponding coefficients of~$f$.
For a power series~$f$ in $\mathcal{O}_{\K}[[z]]$, the \emph{Weierstrass degree} $\wideg(f)$ of~$f$ is the order in $\widetilde{\K}[[z]]$ of the reduction $\widetilde{f}$ of~$f$.  

 The proof of Theorem \ref{thm:lower-bound} follows after the following definition and lemma.%If the characteristic~$\widetilde{\K}$ is positive, and~$f$ is a wildly ramified power series in~$\mathcal{O}_{\K}[[z]]$, it is well-known that the minimal period of every periodic point of~$f$ in~$\mathfrak{m}_{\K}$ is a power of~$p$, see for instance \cite[Lemma 2.3]{LindahlRiveraLetelier2015}.

\begin{mydef}
  Let $p$ be a prime number and $\K$ field of characteristic $p$.
  For a wildly ramified power series~$f$ in $\K[[z]]$, define for each integer $n \geq 0$ the element~$\delta_n(f)$ of $\K$ as follows: Put $\delta_n(f) \= 0$ if $i_n(f) = +\infty$, and otherwise let $\delta_n(f)$ be the coefficient of~$z^{i_n(f)+1}$ in~$f^{p^n}(z)$.
\end{mydef}

\begin{lemma}[Special case of Lemma~2.4 in \cite{LindahlRiveraLetelier2015}]\label{lem24}
  Let~$p$ be a prime number and $(\K,|\cdot|)$ an non-archimedean field of characteristic $p$.
  Then, for every wildly ramified power series~$f$ in $\mathcal{O}_{\K}[[z]]$, the following properties hold.
\begin{enumerate}
\item
  Let~$w_0$ in $\mathfrak{m}_{\K}$ be a fixed point of~$f$ different from~$0$.
  Then we have
  \begin{displaymath}
    |w_0|\geq |\delta_0(f)|
  \end{displaymath}
  with equality if and only if
  \begin{displaymath}
    \wideg(f(z)-z)= i_0(f)+2.
  \end{displaymath}
\item
  Let $n\geq1$ be an integer and~$z_0$ in~$\mathfrak{m}_{\K}$ a periodic point of~$f$ of minimal period~$p^n$.
  If in addition $i_n(f)<+\infty$, then we have
  \begin{displaymath}
    |z_0|
    \geq
    \left|\frac{\delta_n(f)}{\delta_{n-1}(f)}\right|^{\frac{1}{p^n}},
  \end{displaymath}
  with equality if and only if
  \begin{equation}
    \label{widegzeta}
    \wideg\left(\frac{f^{p^n}(z)-z}{f^{p^{n-1}}(z)-z}\right) = i_n(f)-i_{n-1}(f)+p^n.
  \end{equation}
  Moreover, if (\ref{widegzeta}) holds, then the cycle containing $z_0$ is the only cycle of minimal period $p^n$ of~$f$ in $\mathfrak{m}_{\K}$, and for every point $z_0'$ in this cycle $|z_0'|=\left|\frac{\delta_n(f)}{\delta_{n-1}(f)}\right|^{\frac{1}{p^n}}$.
\end{enumerate}
\end{lemma}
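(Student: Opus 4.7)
The plan is to establish both parts by applying the Weierstrass preparation theorem and the non-archimedean triangle inequality, first to $f(z) - z$ for the fixed point statement, then to the quotient $Q(z) \= (f^{p^n}(z) - z)/(f^{p^{n-1}}(z) - z)$ for the periodic point statement.

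For part (1), I would write $f(z) - z = \sum_{k \ge i_0(f)+1} c_k z^k$ with $c_{i_0(f)+1} = \delta_0(f)$ and all $c_k \in \mathcal{O}_{\K}$, hence $|c_k| \le 1$. Dividing the fixed-point equation $f(w_0) = w_0$ by $w_0^{i_0(f)+1}$, permissible since $w_0 \neq 0$, gives $\delta_0(f) = -\sum_{k \ge 1} c_{i_0(f)+1+k}\, w_0^k$. Since each term on the right has absolute value at most $|w_0|^k \le |w_0|$ and $|w_0| < 1$, the non-archimedean triangle inequality yields $|\delta_0(f)| \le |w_0|$. Equality forces a single dominant term; those with $k \ge 2$ have absolute value at most $|w_0|^2 < |w_0|$, leaving only $k = 1$, whose size reaches $|w_0|$ iff $|c_{i_0(f)+2}| = 1$, which is exactly the condition $\wideg(f(z)-z) = i_0(f) + 2$ (together with $|\delta_0(f)|<1$, which is automatic here since $w_0\in\mathfrak{m}_{\K}$).

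For part (2), I would apply the Weierstrass preparation theorem to $g_n \= (f^{p^n}(z) - z)/z^{i_n(f)+1}$ and similarly to $g_{n-1}$, obtaining factorizations $g_k = u_k P_k$ with $u_k$ a unit in $\mathcal{O}_{\K}[[z]]$ and $P_k \in \mathcal{O}_{\K}[z]$ a distinguished polynomial of degree $\wideg(g_k)$. Since $f^{p^{n-1}}(z) - z$ divides $f^{p^n}(z) - z$ in $\K[[z]]$ (shown by writing $f^{p^n}(z) - f^{p^{n-1}}(z) = (f^{p^{n-1}}(z) - z) h(z)$ via the bivariate factorization of $f^{p^{n-1}}(u) - f^{p^{n-1}}(v)$), the uniqueness in Weierstrass preparation forces $P_{n-1} \mid P_n$ in $\mathcal{O}_{\K}[z]$, and the monic quotient $S_n \= P_n / P_{n-1}$ is again a distinguished polynomial. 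Its roots in $\mathfrak{m}_{\K}$ are exactly the periodic points of minimal period $p^n$, which partition into $r$ cycles of size $p^n$; moreover $f(z)/z \equiv 1 \pmod{\mathfrak{m}_{\K}[[z]]}$ gives $|f(z)| = |z|$ on $\mathfrak{m}_{\K}$, so all points of a given cycle share a common absolute value $\rho_i$. Taking the product of roots in the monic polynomial $S_n$,
\[
\prod_{i=1}^{r} \rho_i^{p^n} \;=\; |S_n(0)| \;=\; \frac{|P_n(0)|}{|P_{n-1}(0)|} \;=\; \frac{|\delta_n(f)|}{|\delta_{n-1}(f)|},
\]
and since each $\rho_j < 1$, the absolute value $\rho_{i_0}$ of the cycle containing $z_0$ satisfies $\rho_{i_0}^{p^n} \ge \prod_i \rho_i^{p^n}$, yielding $|z_0| \ge |\delta_n(f)/\delta_{n-1}(f)|^{1/p^n}$. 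Equality holds iff $r = 1$, i.e.\ iff $\deg S_n = p^n$; and since $Q(z) = z^{i_n(f)-i_{n-1}(f)}\,(u_n/u_{n-1})\,S_n(z)$ with $u_n/u_{n-1}$ a unit in $\mathcal{O}_{\K}[[z]]$, this is equivalent to \eqref{widegzeta}. When this happens, the single cycle is the one through $z_0$ and all its members share the same absolute value, giving the final assertion.

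The step I expect to require the most care is the correct identification of the roots of $S_n$ with points of \emph{minimal} period $p^n$ and the attendant divisibility $P_{n-1}\mid P_n$ inside $\mathcal{O}_{\K}[z]$ rather than only in $\K[[z]]$. This leans on the uniqueness clause in Weierstrass preparation together with the closure of distinguished polynomials under such quotients; once these structural facts are in hand, the rest of the argument reduces to the straightforward non-archimedean estimates indicated above.
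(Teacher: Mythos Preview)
The paper gives no proof of this lemma; it is imported verbatim as a special case of Lemma~2.4 of \cite{LindahlRiveraLetelier2015} and used as a black box in the proof of Theorem~\ref{thm:lower-bound}. There is therefore nothing in the present paper to compare your argument against.

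That said, your outline is essentially the argument one finds in the cited reference, and part~(1) is correct as written. In part~(2) one step is stated more strongly than what you actually need and is not obviously true: you assert that the roots of $S_n$ partition into cycles of length exactly $p^n$. This can fail, since a periodic point of period dividing $p^{n-1}$ may occur with higher multiplicity in $P_n$ than in $P_{n-1}$ (think of a nonzero parabolic periodic point) and hence survive in $S_n$, and a point of minimal period $p^n$ may itself be a multiple root. The desired inequality, however, does not require the full cycle partition: it suffices to observe that the $p^n$ distinct points of the orbit of $z_0$ are roots of $S_n$ (each with multiplicity at least one) sharing the common absolute value $|z_0| < 1$, while every remaining root of the distinguished polynomial $S_n$ also lies in the maximal ideal. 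The product formula then gives $|S_n(0)| \le |z_0|^{p^n}$ directly, with equality precisely when $\deg S_n = p^n$, which is your condition~\eqref{widegzeta}; the uniqueness clause follows because equality forces the orbit of $z_0$ to exhaust the roots of $S_n$. One further small point: to invoke the product-of-roots identity you must pass to (a completion of) an algebraic closure rather than work only with roots in $\mathfrak{m}_{\K}$; this is routine but should be made explicit.
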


It should be noted that the following proof is nearly verbatim that of \cite[Theorem 3]{NordqvistRL2019} with replacing $\resit(f)$ by $\pind_1(f)$.
\begin{proof}[Proof of Theorem~\ref{thm:lower-bound}]
  The assertion about fixed points is a direct consequence of the fact that ${\delta_0(f) = a}$ and Lemma~\ref{lem24}(1).
  
  To prove the statement about periodic points that are not fixed, note first that the statement holds trivially in the case~$\pind_1(f) = 0$.
  Thus, we assume that~$\pind_1(f) \neq 0$, and therefore~$f$ has lower ramification numbers of the form \[i_n(f) = \ell(1+p+\dots+p^{n-1})+qp^n\] by  Theorem~\ref{thm:criterion}.
  In particular, for every integer~$n \ge 1$ we have~$i_n(f) < + \infty$.
  Further, by Proposition~\ref{prop:pn} we have for every integer~$n \ge 1$ %%% XX i_n(g) --> i_n(f)
  \begin{displaymath}
    \delta_n(f) = (-1)^n\alpha^{\frac{p^{n+1}-1}{p-1}}\pind_1(f)^{\frac{p^n-1}{p-1}}.
  \end{displaymath}
  Hence, by Lemma~\ref{lem24}(2) we have for every periodic point~$z_0$ in~$\mathfrak{m}_k$ of minimal period~$p^n$,
\begin{equation}\label{normbound2}
  |z_0|
  \geq
  \left|\frac{\delta_n(f)}{\delta_{n-1}(f)}\right|^{\frac{1}{p^n}}
  =
  \left| a^{p^n} \pind_1(f)^{p^{n-1}} \right|^{\frac{1}{p^n}}
  =
  |a| \cdot | \pind_1(f)|^{\frac{1}{p}}.
\end{equation}
This completes the proof of Theorem~\ref{thm:lower-bound}.
\end{proof}

\end{document}